   \numberwithin{equation}{section}
\journal{``Journal of Pseudo-Differential Operators and Applications"} 
\newtheorem{thm}{Theorem}[section]
\newtheorem{lem}[thm]{Lemma}
\newtheorem{defn}[thm]{Definition}
\begin{document}
\begin{frontmatter}
\author{Tong Wu$^{a}$}
\ead{wut977@nenu.edu.cn}
\author{Yong Wang$^{b,*}$}
\ead{wangy581@nenu.edu.cn}
\cortext[cor]{Corresponding author.}
\address{$^a$Department of Mathematics, Northeastern University, Shenyang, 110819, China}
\address{$^b$School of Mathematics and Statistics, Northeast Normal University,
Changchun, 130024, China}

\title{The spectral torsion for the rescaled Dirac operator}
\begin{abstract}
 In the paper, we give four different examples of the rescaled Dirac operator by the perturbation of the function $f$. Further, based on the trilinear Clifford multiplication by functional of differential one-forms, we compute the spectral torsion for four kinds of rescaled Dirac operator on even-dimensional oriented compact spin Riemannian manifolds without boundary.
\end{abstract}
\begin{keyword}The rescaled Dirac operator; the trilinear Clifford multiplication; the spectral torsion.

\end{keyword}
\end{frontmatter}
\section{Introduction}
 Until now, many geometers have studied the noncommutative residues. In \cite{Gu,Wo}, authors found the noncommutative residues are of great importance to the study of noncommutative geometry. Wodzicki \cite{Wo} first introduced the concept of the noncommutative residue in the study of higher-dimensional manifolds, namely: the noncommative residue is a trace over the algebra of all classical pseudodifferential operators on a closed compact manifold. However, this trace is not an extension of the usual trace. The noncommutative residue can also be called the Wodzicki residue. Let $\Phi:\Sigma\rightarrow R^d$ be Riemannian surface, where $\Phi=(\phi_1,\cdot\cdot\cdot,\phi_d)$ is a smooth embedding, $g=\sum_{ij}\eta_{ij}d\phi^i\otimes d\phi^j$ is the metric on Riemannian surface. Then Polyakov action is defined by
 $I:=\frac{1}{2\pi}\int_M\eta_{ij}d\phi^i\otimes d\phi^j.$ In \cite{Co1}, Connes used the noncommutative residue to derive a conformal 4-dimensional Polyakov action analogy.  Connes showed us that the noncommutative residue on a compact manifold $M$ coincided with the Dixmier's trace(see $\S7.5$ in \cite{J1}) on pseudo-differential operators of order $-{\rm {dim}}M$ in \cite{Co2}.
And Connes claimed the noncommutative residue of the square of the inverse of the Dirac operator was proportioned to the Einstein-Hilbert action. That is $${\rm Wres}(D^{-2})=c_{0}\int_{M}sdvol_{M},$$
where $c_0$ is a constant and $s$ is the scalar curvature.  Kastler \cite{Ka} gave a
brute-force proof of this theorem. Kalau and Walze proved this theorem in the normal coordinates system simultaneously in \cite{KW}. Therefore, we call it the Kastler-Kalau-Walze theorem. Ackermann proved that
the Wodzicki residue of the square of the inverse of the Dirac operator ${\rm  Wres}(D^{-2})$ in turn is essentially the second coefficient
of the heat kernel expansion of $D^{2}$ in \cite{Ac}.

 Recently, the significance of the spectral torsion has been emphasized in a somewhat distinct context. In
 \cite{DL3}, Dabrowski, Sitarz and Zaleck proposed a plain, purely spectral method that allowed to determine the torsion as
 the density of the torsion functional and imposed the torsion-free condition for regular finitely summable
 spectral triples, which is a first step towards linking the spectral approach with the algebraic approach
 based on Levi-Civita connections. Dirac operators with torsion are by now well-established analytical tools
 in the study of special geometric structures. Ackermann and Tolksdorf \cite{Ac1} proved a generalized version of
 the well-known Lichnerowicz formula for the square of the most general Dirac operator with torsion $D_T$ on
 an even-dimensional spin manifold associated to a metric connection with torsion. In \cite{PF1,PF2}, Pf$\ddot{a}$ffle and
 Stephan considered orthogonal connections with arbitrary torsion on compact Riemannian manifolds, and for
 the induced Dirac operators, twisted Dirac operators and Dirac operators of Chamseddine-Connes type they
 computed the spectral action. Sitarz and Zajac \cite{sa} investigated the spectral action for scalar perturbations
 of Dirac operators. Wang \cite{Wa0} considered the arbitrary perturbations of Dirac operators, and established the
 associated Kastler- Kalau-Walze theorem. In \cite{WWY}, Wang, Wang and Yang gave two kinds of operator-theoretic
explanations of the gravitational action about Dirac operators with torsion in the case of 4-dimensional compact manifolds with
flat boundary.  In \cite{WWw}, we gave some new
spectral functionals which is the extension of spectral functionals to the noncommutative realm with torsion, and related them to the
noncommutative residue for manifolds with boundary about Dirac operators with torsion.

 Based on the spectral torsion and the noncommutative residue, Dabrowski et al. \cite{DL3} showed that the
 spectral definition of torsion can be readily extended to the noncommutative case of spectral triples. By
 twisting the spectral triple of a Riemannian spin manifold, Martinetti et al. showed how to generate
 an orthogonal and geodesic preserving torsion from a torsionless Dirac operator in [28]. Hong and Wang computed the spectral Einstein functional associated with the Dirac operator with torsion
 on even-dimensional spin manifolds without boundary in \cite{Hj}. In \cite{WWj}, Wang and Wang provide an explicit computation of the spectral torsion
 associated with the Connes type operator on even dimension compact manifolds.
In \cite{Si1}, Sitarz proposed a new idea of conformally rescaled and curved spectral triples, which
are obtained from a real spectral triple by a nontrivial scaling of the Dirac
operator. In \cite{wt}, we compute the noncommutative residue for the rescaled Dirac operator $fDh$ on 6-dimensional
compact manifolds without boundary. And we also give some
important special cases which can be solved by our calculation methods.
 {\bf The motivation} of this paper is to compute the spectral torsion for the rescaled Dirac operator with the trilinear Clifford multiplication by functional of differential one-forms $c(u),c(v),c(w)$ on even-dimensional oriented compact spin Riemannian manifolds without boundary, where $c(u)=\sum_{r=1}^{n} u_{r}c(e_r), c(v)=\sum_{p=1}^{n} v_{p}c(e_{p}), c(w)=\sum_{q=1}^{n} w_{q}c(e_{q})$.

This paper is organized as follows. In Section \ref{section:2}, we introduce some notations about Clifford action and the rescaled Dirac operator. Using the residue for a differential operator of Laplace type and the composition formula of pseudo-differential operators, some general symbols of the generalized laplacian for the rescaled Dirac operator are given in Section \ref{section:3}. We compute the spectral torsion for four kinds of rescaled Dirac operator on even-dimensional oriented compact spin Riemannian manifolds without boundary in Section \ref{section:4} and \ref{section:5}.
\section{ Preliminaries for the rescaled Dirac operator}
\label{section:2}
 The purpose of this section is to introduce some notations about Clifford action and the rescaled Dirac operator.

Let $M$ be an $=2m$-dimensional ($n\geq 3$) oriented compact spin Riemannian manifold with a Riemannian metric $g$. And let $\nabla^L$ be the Levi-Civita connection about $g$. In the
fixed orthonormal frame $\{e_1,\cdots,e_n\}$ in $TM$, $TM$ (resp. $T^*M$) denote the tangent (resp. cotangent) vector bundle of $M$, the connection matrix $(\omega_{s,t})$ is defined by
\begin{equation}
\label{eq1}
\nabla^L(e_1,\cdots,e_n)= (e_1,\cdots,e_n)(\omega_{s,t}).
\end{equation}

 Let $c(e), \hat{c}(e)$ be the Clifford operators acting on the exterior algebra bundle $\Lambda^*(T^*M)$ of $T^*M$ defined by
 $$c(e)=e^*\wedge-i_e,~~~\hat{c}(e)=e^*\wedge+i_e,$$
where $e^*$ and $i_e$ are the notation for the exterior and interior multiplications respectively.
For  $\{e_1,\cdots,e_n\}$, one has
\begin{align}
\label{ali1}
&\hat{c}(e_i)\hat{c}(e_j)+\hat{c}(e_j)\hat{c}(e_i)=2g(e_i,e_j)=2\delta^i_j;~~\nonumber\\
&c(e_i)c(e_j)+c(e_j)c(e_i)=-2g(e_i,e_j)=-2\delta^i_j;~~\nonumber\\
&c(e_i)\hat{c}(e_j)+\hat{c}(e_j)c(e_i)=0.
\end{align}
By \cite{Wa3}, we have the Dirac operator
\begin{align}
D&=\sum^n_{i=1}c(e_i)\bigg(e_i-\frac{1}{4}\sum_{s,t}\omega_{s,t}
(e_i)c(e_s)c(e_t)\bigg).
\end{align}
The symbol expansion of a parametrix of $D$ is given,
\begin{align}\label{sigma0}
\sigma_{1}(D)=\sqrt{-1}c(\xi);~~~
		\sigma_{0}(D)=-\frac{1}{4}\sum_{i,s,t}\omega_{st}(e_i)c(e_i)c(e_s)c(e_t).
	\end{align}
Consider the rescaled Dirac operator, which is defined as
 \begin{align}\label{xx}
f(D+\mathbb{A})f=f\bigg[\sum^n_{i=1}c(e_i)\bigg(e_i-\frac{1}{4}\sum_{s,t}\omega_{s,t}
(e_i)c(e_s)c(e_t)\bigg)+\mathbb{A} \bigg]f,
\end{align}
where $f$ is smooth function on $M$, $f(x)\neq 0,$ for $\forall x \in M$ and $\mathbb{A}$ denotes the Clifford multiplication by any form.\\
Then we obtain the leading symbols of $f(D+\mathbb{A})f$:
	\begin{align}\label{sigma0}
\sigma_{1}(f(D+\mathbb{A})f)=\sqrt{-1}fc(\xi)f;~~~
		\sigma_{0}(f(D+\mathbb{A})f)=-\frac{1}{4}f\sum_{i,s,t}\omega_{st}(e_i)c(e_i)c(e_s)c(e_t)f+f\mathbb{A} f.
	\end{align}

Next, we want to get the leading symbols of $(f(D+\mathbb{A})f)^2$.
For a differential operator of Laplace type $P$, it has locally the form
\begin{equation}\label{p3}
	P=-(g^{ij}\partial_i\partial_j+A^i\partial_i+B),
\end{equation}
where $\partial_{i}$  is a natural local frame on $TM,$ $(g^{ij})_{1\leq i,j\leq n}$ is the inverse matrix associated to the metric
matrix  $(g_{ij})_{1\leq i,j\leq n}$ on $M,$ $A^{i}$ and $B$ are smooth sections of $\textrm{End}(N)$ on $M$ (endomorphism).\\
\indent Write the Dirac operators $D^2$ and $D^{-1}$ by different orders as:
 \begin{eqnarray}
D_x^{\alpha}&=(-i)^{|\alpha|}\partial_x^{\alpha};
~\sigma(D^2)=p_2+p_1+p_0;
~\sigma(D^{-1})=\sum^{\infty}_{j=1}q_{-j}.
\end{eqnarray}

\indent By the composition formula of pseudo-differential operators, we have
\begin{align}
1=\sigma(D^2\circ {D}^{-2})&=\sum_{\alpha}\frac{1}{\alpha!}\partial^{\alpha}_{\xi}[\sigma({D^2})]
{D}_x^{\alpha}[\sigma({D}^{-2})]\nonumber\\
&=(p_2+p_1+p_0)(q_{-2}+q_{-3}+q_{-4}+\cdots)\nonumber\\
&~~~+\sum_j(\partial_{\xi_j}p_2+\partial_{\xi_j}p_1+\partial_{\xi_j}p_0)(
D_{x_j}q_{-2}+D_{x_j}q_{-3}+D_{x_j}q_{-4}+\cdots)\nonumber\\
&~~~+\sum_{i,j}(\partial_{\xi_i}\partial_{\xi_j}p_2+\partial_{\xi_i}\partial_{\xi_j}p_1+\partial_{\xi_i}\partial_{\xi_j}p_0)(
D_{x_i}D_{x_j}q_{-2}+D_{x_i}D_{x_j}q_{-3}+D_{x_i}D_{x_j}q_{-4}+\cdots)\nonumber\\
&=p_2q_{-2}+(p_1q_{-2}+p_2q_{-3}+\sum_j\partial_{\xi_j}p_2D_{x_j}q_{-2})+(p_0q_{-2}+p_1q_{-3}+p_2q_{-4}\nonumber\\
&+\sum_j\partial_{\xi_j}p_2D_{x_j}q_{-3}+\sum_{i,j}\partial_{\xi_i}\partial_{\xi_j}p_2D_{x_i}D_{x_j}q_{-2})+\cdots,
\end{align}
so
\begin{equation}
q_{-2}=p_2^{-1};~q_{-3}=-p_2^{-1}[p_1p_1^{-2}+\sum_j\partial_{\xi_j}p_2D_{x_j}q_{-2}].
\end{equation}

To get the leading symbols of $(f(D+\triangle)f)^2$, we first expand it.
\begin{align}\label{p4}
(f(D+\mathbb{A})f)^2&=f(D+\mathbb{A})f^2(D+\mathbb{A})f\nonumber\\
&=f^4(D+\mathbb{A})^2+fc(df^3)(D+\mathbb{A})+f^3(D+\mathbb{A})c(df)+fc(df^2)c(df).
\end{align}
Obviously, we only need to further expand $(D+\mathbb{A})^2$.
\begin{align}
(D+\mathbb{A})^2=D^2+D\mathbb{A}+\mathbb{A} D+\mathbb{A}^2.
\end{align}
\indent Let $g^{ij}=g(dx_{i},dx_{j})$, $\xi=\sum_{j}\xi_{j}dx_{j}$ and $\nabla^L_{\partial_{i}}\partial_{j}=\sum_{k}\Gamma_{ij}^{k}\partial_{k}$,  we denote that
\begin{align}\label{p2}
&\sigma_{i}=-\frac{1}{4}\sum_{s,t}\omega_{s,t}
(e_i)c(e_s)c(e_t)
;\nonumber\\
&\xi^{j}=g^{ij}\xi_{i};~~~~\Gamma^{k}=g^{ij}\Gamma_{ij}^{k};~~~~\sigma^{j}=g^{ij}\sigma_{i}.
\end{align}
Then by (\ref{p3}) and (\ref{p2}), we have
\begin{align}
D\mathbb{A}+\mathbb{A} D=\sum_{i,j}g^{ij}\bigg(c(\partial_i)\mathbb{A}+\mathbb{A} c(\partial_i)\bigg)\partial_j+\sum_{i,j}g^{ij}\bigg(c(\partial_i)\partial_j(\mathbb{A})+c(\partial_i)\sigma_j\mathbb{A}+\mathbb{A} c(\partial_i)\sigma_j\bigg).
\end{align}
Further, the following result is obtained.
\begin{align}
(D+\mathbb{A})^2&=-\sum_{i,j}g^{ij}\partial_i\partial_j+\bigg(-2\sigma^j+\Gamma^j+c(\partial_i)\mathbb{A}+\mathbb{A} c(\partial_i)\bigg)\partial_j+\sum_{i,j}g^{ij}\bigg(-\partial_i(\sigma_j)-\sigma_i\sigma_j\nonumber\\
&+\Gamma^k_{ij}\sigma_k+c(\partial_i)\partial_j(\mathbb{A})+c(\partial_i)\sigma_j\mathbb{A}+\mathbb{A} c(\partial_i)\sigma_j\bigg)+\frac{1}{4}s+\mathbb{A}^2,
\end{align}
where $s$ denotes the scalar curvature.

Using (\ref{p4}), the leading symbols of $(f(D+\mathbb{A})f)^2$ are given.
\begin{lem}\label{lem331}
	The following identities hold:
	\begin{align}\label{sigma0}
\sigma_{2}[(f(D+\mathbb{A})f)^2](x,\xi)&= f^4\|\xi\|^2;\nonumber\\
		\sigma_{1}[(f(D+\mathbb{A})f)^2](x,\xi)&=\sqrt{-1}f^4\bigg(\Gamma^j-2\sigma^j+c(\partial^j)\mathbb{A}+\mathbb{A} c(\partial^j)\bigg)\xi_j+\sqrt{-1}fc(df^3)c(\xi)+\sqrt{-1}f^3c(\xi)c(df);\nonumber\\
\sigma_{0}[(f(D+\mathbb{A})f)^2](x,\xi)&=f^4\bigg\{g^{ij}\bigg(-\partial_i(\sigma_j)-\sigma_i\sigma_j+\Gamma^k_{ij}\sigma_k+c(\partial_i)\partial_j(\mathbb{A})+c(\partial_i)\sigma_j\mathbb{A} +\mathbb{A} c(\partial_i)\sigma_j\bigg)+\frac{1}{4}s\nonumber\\
&+\mathbb{A} ^2\bigg\}+fc(df^3)\mathbb{A} +f^3\mathbb{A} c(df)+fc(df^2)c(df).
	\end{align}
\end{lem}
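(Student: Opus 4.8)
The plan is to read off the three homogeneous symbols directly from the factorization (\ref{p4}),
\[
(f(D+\mathbb{A})f)^2=f^4(D+\mathbb{A})^2+fc(df^3)(D+\mathbb{A})+f^3(D+\mathbb{A})c(df)+fc(df^2)c(df),
\]
together with the Laplace-type expression for $(D+\mathbb{A})^2$ established just above. Since $f^4$, $fc(df^3)$, $f^3$, $c(df)$ and $fc(df^2)c(df)$ are all zeroth-order (multiplication) operators, I would compute the symbol of each of the four summands from the composition formula $\sigma(P\circ Q)=\sum_{\alpha}\frac{1}{\alpha!}\partial_{\xi}^{\alpha}\sigma(P)\,D_x^{\alpha}\sigma(Q)$ and then collect terms of homogeneity degrees $2$, $1$ and $0$.

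First I would treat $f^4(D+\mathbb{A})^2$: because $f^4$ carries no $\xi$-dependence, $\sigma_j(f^4(D+\mathbb{A})^2)=f^4\,\sigma_j((D+\mathbb{A})^2)$ for $j=2,1,0$, and the three symbols of $(D+\mathbb{A})^2$ are obtained from its Laplace-type form in the standard way — the principal symbol is $\|\xi\|^2$, the subprincipal symbol is $\sqrt{-1}$ times the coefficient of the $\partial_j$-term contracted with $\xi_j$, and $\sigma_0$ is minus the zeroth-order coefficient. This already produces the $f^4$-blocks of all three claimed identities, in particular the $\sqrt{-1}f^4(\Gamma^j-2\sigma^j+c(\partial^j)\mathbb{A}+\mathbb{A} c(\partial^j))\xi_j$ part of $\sigma_1$ and the $g^{ij}(\cdots)+\tfrac14 s+\mathbb{A}^2$ block of $\sigma_0$.

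Next I would handle the remaining three summands. The term $fc(df^2)c(df)$ is of order zero and contributes only the last summand of $\sigma_0$. For $fc(df^3)(D+\mathbb{A})$, left multiplication by the matrix-valued function $fc(df^3)$ introduces no $\partial_\xi$-corrections, so its symbol is just $fc(df^3)$ times $\sigma(D+\mathbb{A})=\sqrt{-1}c(\xi)+\sigma_0(D)+\mathbb{A}$, giving $\sqrt{-1}fc(df^3)c(\xi)$ in degree $1$ and $fc(df^3)\mathbb{A}$ as the relevant degree-$0$ contribution. For $f^3(D+\mathbb{A})c(df)$ one multiplies $\sigma(D+\mathbb{A})$ on the right by $c(df)$; here the composition formula does generate one subprincipal correction from the $|\alpha|=1$ term $\partial_{\xi_j}(\sqrt{-1}c(\xi))\,D_{x_j}(c(df))$, but this only affects $\sigma_0$, and the surviving pieces are $\sqrt{-1}f^3c(\xi)c(df)$ in degree $1$ and $f^3\mathbb{A} c(df)$ in degree $0$. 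Adding the four contributions degree by degree yields the asserted formulas; the identities $c(df^2)=2f\,c(df)$ and $c(df^3)=3f^2\,c(df)$ are exactly what makes the cross terms in (\ref{p4}) combine into the displayed shape.

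I expect the only genuinely delicate point to be the bookkeeping of the order-zero part: one must be consistent about which subprincipal corrections (the $\partial_\xi\sigma\cdot D_x\sigma$ terms in the right composition, together with the $\sigma_0(D)$-contributions) are retained, and verify that after the combination dictated by (\ref{p4}) everything organizes into precisely the displayed $\sigma_0$. The degree-$2$ and degree-$1$ identities, by contrast, are immediate once the Laplace-type form of $(D+\mathbb{A})^2$ is in hand.
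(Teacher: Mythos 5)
Your approach — expand via the factorization (\ref{p4}), use that $f^4$, $fc(df^3)$, $f^3$, $c(df)$, $fc(df^2)c(df)$ are zeroth-order multiplication operators, and read off the homogeneous pieces from the Laplace-type form of $(D+\mathbb{A})^2$ — is exactly the route the paper takes (the paper states Lemma~\ref{lem331} immediately after displaying (\ref{p4}) and the expanded $(D+\mathbb{A})^2$, with no further argument), and your treatment of $\sigma_2$ and $\sigma_1$ is correct.

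However, your handling of $\sigma_0$ leaves a genuine gap, which you flag but do not close. Multiplying $\sigma(D+\mathbb{A})=\sqrt{-1}c(\xi)+\sigma_0(D)+\mathbb{A}$ on the left by $fc(df^3)$ gives a degree-$0$ contribution $fc(df^3)\bigl(\sigma_0(D)+\mathbb{A}\bigr)$, not merely $fc(df^3)\mathbb{A}$; the term $fc(df^3)\sigma_0(D)$ with $\sigma_0(D)=-\tfrac14\sum_{i,s,t}\omega_{st}(e_i)c(e_i)c(e_s)c(e_t)$ does not vanish in general. Likewise, the right-multiplication $f^3(D+\mathbb{A})\circ c(df)$ produces, besides $f^3\mathbb{A}\,c(df)$, the piece $f^3\sigma_0(D)c(df)$ and the $|\alpha|=1$ composition correction $-\sqrt{-1}\sum_j\partial_{\xi_j}\bigl(\sqrt{-1}f^3c(\xi)\bigr)\,\partial_{x_j}\bigl(c(df)\bigr)$, which is nonzero even at a point $x_0$ where the connection coefficients vanish (it involves second derivatives of $f$). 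Your phrases ``the relevant degree-$0$ contribution'' and ``the surviving pieces'' quietly discard these, and you would need either to show they cancel among the four summands (they do not, as written) or to state that the $\sigma_0$ identity is being asserted only modulo terms killed at $x_0$ in normal coordinates — and even then the $\partial_{x_j}(c(df))$ correction survives. In fairness, the paper's stated $\sigma_0[(f(D+\mathbb{A})f)^2]$ has the same omissions; since only $\sigma_2$ and $\sigma_1$ of the square, and $\sigma_0$ of the \emph{first-order} operator $f(D+\mathbb{A})f$, are actually used downstream, this does not propagate into the main theorems, but a careful proof of the lemma as literally stated should either supply the missing terms or restrict the claim.
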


 \section{Trilinear functional for the rescaled Dirac operator $f(D+\mathbb{A})f$}
 \label{section:3}

In this section, we consider the trilinear functional for the rescaled
 Dirac operator $f(D+\mathbb{A})f$. For our purpose, we provide some basic results through for later calculations.
\begin{defn}\cite{DL3}
Let $c(u)=\sum_{r=1}^{n} u_{r}c(e_r), c(v)=\sum_{p=1}^{n} v_{p}c(e_{p}), c(w)=\sum_{q=1}^{n} w_{q}c(e_{q}),$ for $f(D+\mathbb{A}) f$ given by (\ref{xx}), the trilinear Clifford multiplication by functional of differential one-forms $c(u), c(v), c(w)$
\begin{align}
 	\mathscr{S}_{f(D+\triangle) f}\bigg(c(u),c(v),c(w)\bigg)&=\mathrm{Wres}\bigg(c(u)c(v)c(w)(f(D+\mathbb{A}) f)^{-2m+1}\bigg)
 \end{align}
  is called torsion functional about the rescaled Dirac operator $f(D+\mathbb{A})f$.
\end{defn}
For a pseudo-differential operator  $P$, acting on sections of a vector bundle over an even dimensional compact Riemannian manifold $M$, the analogue of the volume element in the noncommutative geometry is the operator  $D^{-n}:= d s^{n} $. And pertinent operators are realized as pseudo-differential operators on the spaces of sections. Extending previous definitions by Connes \cite{co5}, a noncommutative integral was introduced in \cite{FGV2} based on the noncommutative residue \cite{wo2}, combine (1.4) in \cite{co4} and \cite{Ka}, using the definition of the residue:
\begin{align}\label{wers}
	\int P d s^{n}:=\operatorname{Wres} (P D^{-n}):=\int_{M}\int_{\|\xi\|=1} \operatorname{tr}\left[\sigma_{-n}\left(P D^{-n}\right)\right](x, \xi)\sigma(\xi)dx,
\end{align}
where  $\sigma_{-n}\left(P D^{-n}\right) $ denotes the  $(-n)$th order piece of the complete symbols of  $P D^{-n} $,  $\operatorname{tr}$  as shorthand of trace.

Firstly, we review here technical tool of the computation, which are the integrals of polynomial functions over the unit spheres. By (32) in \cite{B1}, we define
\begin{align*}
I_{S_n}^{\gamma_1\cdot\cdot\cdot\gamma_{2\bar{n}+2}}=\int_{|x|=1}d^nxx^{\gamma_1}\cdot\cdot\cdot x^{\gamma_{2\bar{n}+2}},
\end{align*}
i.e. the monomial integrals over a unit sphere.
Then by Proposition A.2. in \cite{B1}, polynomial integrals over higher spheres in the $n$-dimesional case are given
\begin{align}
I_{S_n}^{\gamma_1\cdot\cdot\cdot\gamma_{2\bar{n}+2}}=\frac{1}{2\bar{n}+n}[\delta^{\gamma_1\gamma_2}I_{S_n}^{\gamma_3\cdot\cdot\cdot\gamma_{2\bar{n}+2}}+\cdot\cdot\cdot+\delta^{\gamma_1\gamma_{2\bar{n}+1}}I_{S_n}^{\gamma_2\cdot\cdot\cdot\gamma_{2\bar{n}+1}}],
\end{align}
where $S_n\equiv S^{n-1}$ in $\mathbb{R}^n$.

For $\bar{n}=0$, we have $I^0={\rm Vol}(S^{n-1})$=$\frac{2\pi^{\frac{n}{2}}}{\Gamma(\frac{n}{2})}$, and we immediately get
\begin{align}
I_{S_n}^{\gamma_1\gamma_2}&=\frac{1}{n}{\rm Vol}(S^{n-1})\delta^{\gamma_1}_{\gamma_2}.
\end{align}

By \eqref{wers}, to obtain the results of the above torsion functional about the rescaled Dirac operator $f(D+\mathbb{A})f$, we need to compute
\begin{align}\label{abd}
\int_{M}\int_{\|\xi\|=1} \operatorname{tr}\left[\sigma_{-2 m}\bigg(c(u)c(v)c(w)(f(D+\mathbb{A})f)^{-2m+1}\bigg)\right](x, \xi)\sigma(\xi)dx.
\end{align}

Based on the algorithm yielding the principal symbol of a product of pseudo-differential operators in terms of the principal symbols of the factors, and by lemma \ref{lem331}, we get
\begin{align}\label{ABD}
&	\sigma_{-2 m}\left(c(u)c(v)c(w)(f(D+\mathbb{A})f)^{-2 m+1}\right)\nonumber\\
&=c(u)c(v)c(w)\sigma_{-2 m}\left((f(D+\mathbb{A})f)^{-2 m}\cdot(f(D+\mathbb{A})f)\right) \nonumber\\
& =c(u)c(v)c(w)\left\{\sum_{|\alpha|=0}^{\infty} \frac{(-i)^{|\alpha|}}{\alpha!} \partial_{\xi}^{\alpha}[\sigma((f(D+\mathbb{A})f)^{-2 m})] \partial_{x}^{\alpha}\left[\sigma\left(f(D+\mathbb{A})f\right)\right]\right\}_{-2 m} \nonumber\\
	& =c(u)c(v)c(w)\sigma_{-2 m}\left((f(D+\mathbb{A})f)^{-2 m}\right)\sigma_{0}(f(D+\mathbb{A})f)\nonumber\\
 &+c(u)c(v)c(w)\sigma_{-2 m-1}\left((f(D+\mathbb{A})f)^{-2 m}\right)\sigma_{1}(f(D+\mathbb{A})f)\nonumber\\
	& +c(u)c(v)c(w)(-\sqrt{-1}) \sum_{j=1}^{2m} \partial_{\xi_{j}}\left[\sigma_{-2 m}\left((f(D+\mathbb{A})f)^{-2 m}\right)\right] \partial_{x_{j}}\left[\sigma_{1}(f(D+\mathbb{A})f)\right].
\end{align}
Write $\sigma_2^{(-m+1)}:=[\sigma_{-2}((f(D+\mathbb{A})f)^{-2})]^{m-1}$, then by (3.8) in \cite{WJ}, we have
\begin{align}\label{9000}
\sigma_{-2m-1}[(f(D+\mathbb{A})f)^{-2m}]=m\sigma_2^{(-m+1)}\sigma_{-3}[(f(D+\mathbb{A})f)^{-2}]-\sqrt{-1}\sum_{k=0}^{m-2}\sum_{\mu=1}^{2m}\partial_{\xi_\mu}\sigma_2^{(-m+k+1)}\partial_{x_\mu}\sigma_2^{-1}(\sigma_2^{-1})^k.
\end{align}
By lemma \ref{lem331} and the composition formula of pseudo-differential operators, the following results are given.
\begin{align}
&\sigma_2^{(-m+1)}=f^{-4m+4}\|\xi\|^{-2m+2};~~~(\sigma_2^{-1})^k=f^{-4k}\|\xi\|^{-2k};\nonumber\\
&\partial_{\xi_\mu}\sigma_2^{(-m+k+1)}=2(-m+k+1)f^{-4m+4k+4}\|\xi\|^{-2m+2k}\xi^\mu;\nonumber\\
&\partial_{x_\mu}\sigma_2^{-1}=\partial_{x_\mu}(f^{-4})\|\xi\|^{-2}-f^{-4}\|\xi\|^{-4}\xi_\alpha\xi_\beta\partial_{x_\mu}g^{\alpha\beta},
\end{align}
and
\begin{align}
\sigma_{-3}[(f(D+\mathbb{A})f)^{-2}]&=-\sqrt{-1}f^{-4}\|\xi\|^{-4}\bigg(\Gamma^\mu-2\sigma^\mu+c(\partial^\mu)\mathbb{A}+\mathbb{A} c(\partial^\mu)\bigg)\xi_\mu-\sqrt{-1}f^{-7}\|\xi\|^{-4}c(df^3)c(\xi)\nonumber\\
&-\sqrt{-1}f^{-5}\|\xi\|^{-4}c(\xi)c(df)+2\sqrt{-1}\|\xi\|^{-4}\xi^\mu\partial_{x_\mu}(f^{-4})-2\sqrt{-1}f^{-4}\|\xi\|^{-6}\xi^\mu\xi_\alpha\xi_\beta\partial_{x_\mu}g^{\alpha\beta}.
\end{align}

Further, substituting above results into (\ref{9000}), we obtain
	\begin{align}\label{sigma}
		\sigma_{-2m-1}[(f(D+\mathbb{A})f)^{-2m}]&=-\sqrt{-1}mf^{-4m}\|\xi\|^{-2m-2}\bigg(\Gamma^\mu-2\sigma^\mu+c(\partial^\mu)\mathbb{A}+\mathbb{A} c(\partial^\mu)\bigg)\xi_\mu\nonumber\\
&-\sqrt{-1}mf^{-4m-3}\|\xi\|^{-2m-2}c(df^3)c(\xi)-\sqrt{-1}mf^{-4m-1}\|\xi\|^{-2m-2}c(\xi)c(df)\nonumber\\
&+2\sqrt{-1}mf^{-4m+4}\|\xi\|^{-2m-2}\xi^\mu\partial_{x_\mu}(f^{-4})-2\sqrt{-1}mf^{-4m}\|\xi\|^{-2m-4}\xi^\mu\xi_\alpha\xi_\beta\partial_{x_\mu}g^{\alpha\beta}\nonumber\\
&-2\sqrt{-1}\sum_{k=0}^{m-2}\sum_{\mu=1}^{2m}f^{-4m+4}(-m+k+1)\|\xi\|^{-2m-2}\xi^\mu\partial_{x_\mu}(f^{-4})\nonumber\\
&+2\sqrt{-1}\sum_{k=0}^{m-2}\sum_{\mu=1}^{2m}f^{-4m}(-m+k+1)\|\xi\|^{-2m-4}\xi^\mu\xi_\alpha\xi_\beta\partial_{x_\mu}g^{\alpha\beta}.
	\end{align}
And by lemma \ref{lem331}, in normal coordinates, we can get each item in (\ref{ABD}).
 \section{The spectral torsion for the rescaled Dirac operator $f(D+\mathbb{A})f$}
 \label{section:4}
  In this section, we develop the several examples of the rescaled Dirac operator $f(D+\mathbb{A})f$, and compute the spectral torsion for them respectively.
\subsection{The spectral torsion for the rescaled Dirac operator $f(D+c(T))f$}
Defining the 3-form $T$, we obtain a new covariant derivative
\begin{equation}
		\langle \nabla_{X}^{T} Y, Z \rangle=\langle \nabla_{X}^{L} Y, Z \rangle + T(X, Y, Z),\nonumber
\end{equation}
where $\nabla^{T}$ denotes the metric connection.

Lift $\nabla^T$ to $\nabla^{S(TM),T}$ on $S(TM)$, let $\mathbb{A}=c(T)=\frac{3}{2}\sum_{1\leqslant j< l< t\leqslant n}T(e_j, e_l, e_t)c(e_{j})c(e_{l})c(e_{t})$, then the Dirac operator with torsion $D_T$ is defined as:
\begin{align}\label{p1}
	D_T&=\sum_{j=1}^{n}c(e_{j})\nabla_{e_{j}}^{S(TM),T}\nonumber\\
		&=\sum_{j=1}^{n}c(e_{j})\bigg(e_{j}+\frac{1}{4}\sum_{l, t=1}^{n}\langle \nabla_{e_{j}}^{T}e_{l}, e_{t}\rangle c(e_{l})c(e_{t})\bigg)\nonumber\\
		&=D+\frac{1}{4}\sum_{j,l,t=1}^{n}T(e_j, e_l, e_t)c(e_{j})c(e_{l})c(e_{t})\nonumber\\
		&=D+\frac{3}{2}\sum_{1\leqslant j< l< t\leqslant n}T(e_j, e_l, e_t)c(e_{j})c(e_{l})c(e_{t})\nonumber\\
&=D+c(T).
\end{align}
By the relation of the Clifford action and ${\rm tr}(AB)={\rm tr}(BA)$, we have the following lemma.
\begin{lem}\label{pppp}
The following identities hold:
\begin{align}
&(1)~~~~{\rm tr}\bigg(c(u)c(v)c(w)c(T)\bigg)=\frac{3}{2}T(u,v,w){\rm tr}[id];\nonumber\\
&(2)~~~~\operatorname{tr} \biggl(c(u)c(v)c(w)c(df^3)\biggr)=\bigg(g(v,w)u(f^3)-g(u,w)v(f^3)+g(u,v)w(f^3)\bigg)
{\rm tr}[id].
\end{align}
\end{lem}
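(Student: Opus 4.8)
The plan is to reduce both identities to standard trace computations for products of Clifford generators $c(e_i)$ acting on the spinor bundle, using only the anticommutation relations \eqref{ali1} and the cyclicity ${\rm tr}(AB)={\rm tr}(BA)$. Recall the elementary facts that on a $2m$-dimensional spin manifold ${\rm tr}[\mathrm{id}]=2^m$, that ${\rm tr}(c(e_{i_1})\cdots c(e_{i_k}))=0$ whenever $k$ is odd, and more precisely that a product of $k\le 3$ distinct generators is traceless while ${\rm tr}(c(e_i)c(e_j))=-2^m\delta_{ij}$. These come directly from \eqref{ali1} and the irreducibility of the Clifford representation.

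For part (1), I would first expand $c(u)=\sum_r u_r c(e_r)$, $c(v)=\sum_p v_p c(e_p)$, $c(w)=\sum_q w_q c(e_q)$, and $c(T)=\frac{3}{2}\sum_{j<l<t}T(e_j,e_l,e_t)c(e_j)c(e_l)c(e_t)$, so that the trace becomes
\begin{align}
{\rm tr}\bigl(c(u)c(v)c(w)c(T)\bigr)=\frac{3}{2}\sum_{r,p,q}\sum_{j<l<t}u_r v_p w_q T(e_j,e_l,e_t)\,{\rm tr}\bigl(c(e_r)c(e_p)c(e_q)c(e_j)c(e_l)c(e_t)\bigr).\nonumber
\end{align}
The key step is to show that ${\rm tr}(c(e_r)c(e_p)c(e_q)c(e_j)c(e_l)c(e_t))$ vanishes unless $\{r,p,q\}=\{j,l,t\}$ as sets (otherwise some generator appears an odd number of times and, after moving it next to itself via anticommutation and using that a single generator times a traceless word is traceless, the trace is zero), and that when the index sets agree the surviving terms reorganize into $\delta$-combinations matching the totally antisymmetric form $T(u,v,w)$. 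Concretely, one uses the antisymmetry of $T$ to replace the constrained sum $\sum_{j<l<t}$ by $\frac{1}{6}\sum_{j,l,t}$, then contracts three of the six generators against the other three using \eqref{ali1}, picking up a sign-weighted sum of products $g(u,e_j)g(v,e_l)g(w,e_t)$ over permutations; summing against $T(e_j,e_l,e_t)$ and using its total antisymmetry collapses all permutations to a single multiple of $T(u,v,w)\,{\rm tr}[\mathrm{id}]$. Tracking the numerical constant ($\frac{3}{2}$ times $\frac{1}{6}$ times the combinatorial factor $6$ from the permutation sum, times the sign bookkeeping) is the one place where care is required; the answer $\frac{3}{2}T(u,v,w){\rm tr}[\mathrm{id}]$ is forced by the fact that $c(T)$ itself is the Clifford quantization of the $3$-form $T$, for which $c(u)c(v)c(w)$ is, up to trace, the natural pairing.

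Part (2) is the same mechanism but easier: since $c(df^3)=\sum_k e_k(f^3)\,c(e_k)$ is a product of a single generator, ${\rm tr}(c(e_r)c(e_p)c(e_q)c(e_k))$ is a trace of four generators, which by \eqref{ali1} equals $2^m(\delta_{rp}\delta_{qk}-\delta_{rq}\delta_{pk}+\delta_{rk}\delta_{pq})$. Contracting against $u_r v_p w_q e_k(f^3)$ gives exactly $\bigl(g(u,v)w(f^3)-g(u,w)v(f^3)+g(v,w)u(f^3)\bigr){\rm tr}[\mathrm{id}]$, which is the claimed identity. I expect no real obstacle here beyond correctly recalling the four-generator trace formula and matching signs. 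The main obstacle overall is therefore purely bookkeeping in part (1): organizing the six-generator trace into the correct signed sum and verifying the constant $\tfrac{3}{2}$, which I would do by first treating the generic case where $u,v,w$ are among the orthonormal frame and all distinct, then invoking multilinearity to conclude in general.
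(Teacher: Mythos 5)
Your proposal is correct and follows essentially the same route as the paper: expand $c(u),c(v),c(w),c(T)$ (resp.\ $c(df^3)$) in the orthonormal frame, reduce to the trace of a product of Clifford generators, and contract using the relations \eqref{ali1} together with the total antisymmetry of $T$. Your treatment of part (2) via the four-generator trace formula $\operatorname{tr}\bigl(c(e_r)c(e_p)c(e_q)c(e_k)\bigr)=(\delta_{rp}\delta_{qk}-\delta_{rq}\delta_{pk}+\delta_{rk}\delta_{pq})\operatorname{tr}[\mathrm{id}]$ is exactly what the paper does; for part (1) the paper carries out the sign and combinatorial bookkeeping you defer, by explicitly expanding the six-generator trace into $\delta$-combinations and regrouping the sum $\sum_{j<l<t}$ over the six orderings of $(j,l,t)$ rather than symmetrizing with a $\tfrac{1}{6}$ factor as you suggest --- two equivalent bookkeeping devices giving the same constant $\tfrac{3}{2}$.
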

\begin{proof}
\begin{align}\label{t12}
	(1)~~~~~&{\rm tr}\bigg(c(u)c(v)c(w)c(T)\bigg)\nonumber\\
	&=\frac{3}{2}\sum_{1\leq j<l<t\leq n}T(e_j,e_l,e_t){\rm tr}\bigg(c(u)c(v)c(w)c(e_j)c(e_l)c(e_t)\bigg)\nonumber\\
&=\frac{3}{2}\sum_{\substack{1\leq j<l<t\leq n\\r,p,q}}T(e_j,e_l,e_t)u_rv_pw_q{\rm tr}\bigg(c(e_r)c(e_p)c(e_q)c(e_j)c(e_l)c(e_t)\bigg)\nonumber\\
&=\frac{3}{2}\sum_{\substack{1\leq j<l<t\leq n\\r,p,q}}T(e_j,e_l,e_t)u_rv_pw_q\bigg(\delta_t^q(\delta_l^p\delta_j^r-\delta_l^r\delta_j^p)-\delta_t^p(\delta_l^q\delta_j^r-\delta_l^r\delta_j^q)+\delta_t^r(\delta_l^q\delta_j^p-\delta_l^p\delta_j^q)\bigg){\rm tr}[id]\nonumber\\
&=\frac{3}{2}\sum_{1\leq j<l<t\leq n}T(e_j,e_l,e_t)\bigg(u_jv_lw_t-u_lv_jw_t-u_jv_tw_l+u_lv_tw_j+u_tv_jw_l-u_tv_lw_j\bigg){\rm tr}[id]\nonumber\\
&=\frac{3}{2}\bigg(\sum_{1\leq j<l<t\leq n}T(e_j,e_l,e_t)+\sum_{1\leq l<j<t\leq n}T(e_j,e_l,e_t)+\sum_{1\leq j<t<l\leq n}T(e_j,e_l,e_t)\nonumber\\
&+\sum_{1\leq t<j<l\leq n}T(e_j,e_l,e_t)+\sum_{1\leq l<t<j\leq n}T(e_j,e_l,e_t)+\sum_{1\leq t<l<j\leq n}T(e_j,e_l,e_t)\bigg)u_jv_lw_t{\rm tr}[id]\nonumber\\
&=\frac{3}{2}T(u,v,w){\rm tr}[id];\\
(2)~~~~~&\operatorname{tr} \biggl(c(u)c(v)c(w)c(df^3)\biggr)\nonumber\\
&=\sum_{\mu,r,p,q}\bigg(\partial_{x_\mu}(f^3)u_rv_pw_q(\delta^r_p\delta_q^\mu-\delta_r^q\delta_\mu^p+\delta_r^\mu\delta_p^q)\bigg){\rm tr}[id]\nonumber\\
&=\sum_{\mu,r,p,q}\bigg(\partial_{x_q}(f^3)u_rv_rw_q-\partial_{x_p}(f^3)u_qv_pw_q+\partial_{x_r}(f^3)u_rv_pw_p\bigg){\rm tr}[id]\nonumber\\
&=\bigg(g(v,w)u(f^3)-g(u,w)v(f^3)+g(u,v)w(f^3)\bigg)
{\rm tr}[id].
\end{align}
\end{proof}
 For any fixed point $x_0\in M$, we can choose the normal coordinates $U$ of $x_0$ in $M$. Then we have the following lemma.
\begin{lem}
In the normal coordinates $U$ of $x_0$ in $M$,
\begin{align}
\sum_{i,s,t}w_{st}(e_i)c(e_i)c(e_s)c(e_t)(x_0)=0;~~~\Gamma^k(x_0)=0;~~~\sigma^k(x_0)=0;~~~\partial_{x_k}g^{\alpha\beta}(x_0)=0.
\end{align}
\end{lem}
Next we arrive to our first main result.
 \begin{thm}\label{thm}
 	Let $M$ be an $n=2m$ dimensional ($n\geq 3$) oriented compact spin Riemannian manifold, for the rescaled Dirac operator with the trilinear Clifford multiplication by functional of differential one-forms $c(u),c(v),c(w),$ the spectral torsion for $f(D+c(T))f$ equals to
 \begin{align}
 	&\mathscr{S}_{f(D+c(T))f}\bigg(c(u),c(v),c(w)\bigg)\nonumber\\
 &=\;2^{m} \frac{2 \pi^{m}}{\Gamma\left(m\right)}\int_{M}\bigg\{-3f^{-4m+2}T(u,v,w)+mf^{-4m+1}\bigg(g(u,w)v(f)-g(v,w)u(f)-g(u,v)w(f)\bigg)\bigg\}d{\rm Vol}_M.
 \end{align}
 \end{thm}

\begin{proof}
Substituting the symbols of $f(D+c(T))f$ into (\ref{ABD}), then by (\ref{abd}), we need to compute the following three parts {\bf (I)-(III)}.

\noindent {\bf (I)} For $c(u)c(v)c(w)\sigma_{-2 m}\left((f(D+c(T))f)^{-2 m}\right)\sigma_{0}(f(D+c(T))f)(x_{0})$:
\begin{align}\label{0-2m}
&c(u)c(v)c(w)\sigma_{-2 m}\left((f(D+c(T))f)^{-2 m}\right)\sigma_{0}(f(D+c(T))f)(x_{0})\nonumber\\
&=f^{-4m}\|\xi\|^{-2m}c(u)c(v)c(w)\left(-\frac{1}{4}f\sum_{ist}w_{st}(e_i)c(e_i)c(e_s)c(e_t)f\right)(x_0)\nonumber\\
&+f^{-4m}\|\xi\|^{-2m}c(u)c(v)c(w)\left(fc(T)f\right)(x_0)\nonumber\\
&=f^{-4m+2}\|\xi\|^{-2m}c(u)c(v)c(w)c(T),
\end{align}
then by lemma \ref{pppp} and further integration calculation, we get
\begin{align}\label{m8}
	&\int_{\|\xi\|=1}\operatorname{tr}\biggl\{ c(u)c(v)c(w)\sigma_{-2 m}\left((f(D+c(T))f)^{-2 m}\right)\sigma_{0}(f(D+c(T))f) \biggr\}(x_0)\sigma(\xi)\nonumber\\
&=\int_{\|\xi\|=1}\frac{3}{2}f^{-4m+2}\|\xi\|^{-2m}T(u,v,w){\rm tr}[id]\sigma(\xi)\nonumber\\
&=\frac{3}{2}f^{-4m+2}T(u,v,w){\rm tr}[id]{\rm Vol}(S^{n-1}).
\end{align}

\noindent{\bf (II)}
For $c(u)c(v)c(w)\sigma_{-2 m-1}\left((f(D+c(T))f)^{-2 m}\right)\sigma_{1}(f(D+c(T))f)(x_0)$:
\begin{align}\label{2-2m-2}
	&c(u)c(v)c(w)\sigma_{-2 m-1}\left((f(D+c(T))f)^{-2 m}\right)\sigma_{1}(f(D+c(T))f)(x_0)\nonumber\\
	&=mf^{-4m+2}\|\xi\|^{-2m-2}\sum_{\mu}\xi_\mu c(u)c(v)c(w)c(\partial^\mu)c(T)c(\xi)(x_0)\nonumber\\
&+mf^{-4m+2}\|\xi\|^{-2m-2}\sum_{\mu}\xi_\mu c(u)c(v)c(w)c(T)c(\partial^\mu)c(\xi)(x_0)\nonumber\\
	&-mf^{-4m-1}\|\xi\|^{-2m} c(u)c(v)c(w)c(df^3)(x_0)\nonumber\\
&+mf^{-4m+1}\|\xi\|^{-2m-2} c(u)c(v)c(w)c(\xi)c(df)c(\xi)(x_0)\nonumber\\
&-2mf^{-4m+6}\|\xi\|^{-2m-2}\sum_{\mu}\xi_\mu\partial_{x_\mu}(f^{-4})c(u)c(v)c(w)c(\xi)(x_0)\nonumber\\
&+f^{-4m+6}\sum_{k=0}^{m-2}\sum_{\mu}(-m+k+1)\|\xi\|^{-2m}\xi_\mu\partial_{x_\mu}(f^{-4})c(u)c(v)c(w)c(\xi)(x_0).
\end{align}
The same as the calculation process of (\ref{m8}), the next step is to perform trace and integral operations on the above six interms into (\ref{2-2m-2}).

\noindent{\bf (II-a)}
\begin{align}\label{t7}
	&\int_{\|\xi\|=1}mf^{-4m+2}\|\xi\|^{-2m-2}\sum_{\mu}\xi_\mu \operatorname{tr}\bigg(c(u)c(v)c(w)c(\partial^\mu)c(T)c(\xi)\bigg)\sigma(\xi)\nonumber\\
&=\int_{\|\xi\|=1}\frac{3}{2}mf^{-4m+2}\|\xi\|^{-2m-2}\sum_{1\leq j<l<t\leq n}\sum_{\mu}\xi_\mu \operatorname{tr}\biggl\{c(u)c(v)c(w) T(e_j,e_l,e_t)c(\partial^\mu)c(e_j)c(e_l)c(e_t)c(\xi)\biggr\}\sigma(\xi)\nonumber\\
&=\int_{\|\xi\|=1}\frac{3}{2}mf^{-4m+2}\|\xi\|^{-2m-2}\sum_{\substack{1\leq j<l<t\leq n\\\mu,s,r,p,q}}T(e_j,e_l,e_t)u_rv_pw_q\xi_\mu\xi_s{\rm tr}\bigg(c(e_r)c(e_p)c(e_q)c(e_\mu)c(e_j)c(e_l)c(e_t)c(e_s)\bigg)\sigma(\xi)\nonumber\\
&=\frac{3}{2}mf^{-4m+2}\delta_\mu^s\times\frac{1}{2m}{\rm Vol}(S^{n-1})\sum_{\substack{1\leq j<l<t\leq n\\\mu,s,r,p,q}}T(e_j,e_l,e_t)u_rv_pw_q{\rm tr}\bigg(c(e_r)c(e_p)c(e_q)c(e_\mu)c(e_j)c(e_l)c(e_t)c(e_s)\bigg)\nonumber\\
&=\frac{3}{4}f^{-4m+2}{\rm Vol}(S^{n-1})\sum_{\substack{1\leq j<l<t\leq n\\s,r,p,q}}T(e_j,e_l,e_t)u_rv_pw_q{\rm tr}\bigg(c(e_r)c(e_p)c(e_q)c(e_s)c(e_j)c(e_l)c(e_t)c(e_s)\bigg),
\end{align}
where
\begin{align}\label{qq}
&{\rm tr}\bigg(c(e_r)c(e_p)c(e_q)c(e_s)c(e_j)c(e_l)c(e_t)c(e_s)\bigg)\nonumber\\
&=-2{\rm tr}\bigg(c(e_r)c(e_p)c(e_q)c(e_l)c(e_t)c(e_j)\bigg)+2\delta^l_s{\rm tr}\bigg(c(e_r)c(e_p)c(e_q)c(e_j)c(e_t)c(e_s)\bigg)\nonumber\\
&-2\delta^t_s{\rm tr}\bigg(c(e_r)c(e_p)c(e_q)c(e_j)c(e_l)c(e_s)\bigg)-{\rm tr}\bigg(c(e_r)c(e_p)c(e_q)c(e_j)c(e_l)c(e_t)c(e_s)c(e_s)\bigg)\nonumber\\
&=(2m-6){\rm tr}\bigg(c(e_r)c(e_p)c(e_q)c(e_j)c(e_l)c(e_t)\bigg).
\end{align}
Substituting (\ref{qq}) into (\ref{t7}), we have
\begin{align}\label{t17}
	&\int_{\|\xi\|=1}mf^{-4m+2}\|\xi\|^{-2m-2}\sum_{\mu}\xi_\mu \operatorname{tr}\bigg(c(u)c(v)c(w)c(\partial^\mu)c(T)c(\xi)\bigg)\nonumber\\
&=(m-3)f^{-4m+2}{\rm Vol}(S^{n-1}){\rm tr}\bigg(c(u)c(v)c(w)c(T)\bigg)\nonumber\\
&=\frac{3}{2}(m-3)f^{-4m+2}T(u,v,w){\rm tr}[id]{\rm Vol}(S^{n-1}).
\end{align}
\noindent{\bf (II-b)}Similarly, we obtain
\begin{align}\label{tpp7}
&\int_{\|\xi\|=1}mf^{-4m+2}\|\xi\|^{-2m-2}\sum_{\mu}\xi_\mu \operatorname{tr}\bigg(c(u)c(v)c(w)c(T)c(\partial^\mu)c(\xi)\bigg)\nonumber\\
&=\int_{\|\xi\|=1} \frac{3}{2}mf^{-4m+2}\|\xi\|^{-2m-2}\sum_{\substack{1\leq j<l<t\leq n\\\mu,s,r,p,q}}T(e_j,e_l,e_t)u_rv_pw_q\xi_\mu\xi_s{\rm tr}\bigg(c(e_r)c(e_p)c(e_q)c(e_j)c(e_l)c(e_t)c(e_\mu)c(e_s)\bigg)\sigma(\xi)\nonumber\\
&=\frac{3}{2}mf^{-4m+2}\times\frac{1}{2m}{\rm Vol}(S^{n-1})\sum_{\substack{1\leq j<l<t\leq n\\s,r,p,q}}T(e_j,e_l,e_t)u_rv_pw_q{\rm tr}\bigg(c(e_r)c(e_p)c(e_q)c(e_j)c(e_l)c(e_t)c(e_s)c(e_s)\bigg)\nonumber\\
&=-\frac{3}{2}mf^{-4m+2}\times T(u,v,w){\rm tr}[id]{\rm Vol}(S^{n-1}).
\end{align}

\noindent{\bf (II-c)}Let $g(u,v)=\sum_{i=1}^nu_iv_i$ and $w(f)=\sum_{i=1}^n\partial_{x_i}(f)w_i$, we get
\begin{align}
	&\int_{\|\xi\|=1}-mf^{-4m-1}\|\xi\|^{-2m} \operatorname{tr} \biggl\{c(u)c(v)c(w)c(df^3)\biggr\}(x_0)\sigma(\xi)\nonumber\\
	&=mf^{-4m-1}\bigg(g(u,w)v(f^3)-g(v,w)u(f^3)-g(u,v)w(f^3)\bigg)
{\rm tr}[id]{\rm Vol}(S^{n-1})\nonumber\\
&=3mf^{-4m+1}\bigg(g(u,w)v(f)-g(v,w)u(f)-g(u,v)w(f)\bigg)
{\rm tr}[id]{\rm Vol}(S^{n-1}).
\end{align}

\noindent{\bf (II-d)}By the relation of the Clifford action and $\operatorname{tr}(AB) =\operatorname{tr}(BA)$, we have the equality:
\begin{align}\label{444}
	&mf^{-4m+1}\|\xi\|^{-2m-2}\operatorname{tr} \biggl\{ c(u)c(v)c(w)c(\xi)c(df)c(\xi)\biggr\}(x_0)\nonumber\\
&=mf^{-4m+1}\|\xi\|^{-2m-2}\sum_{s,\mu,t,r,p,q}\xi_s\xi_t\partial_{x_\mu}(f){\rm tr}\bigg(c(e_r)c(e_p)c(e_q)c(e_s)\c(e_\mu) c(e_t)\bigg)\nonumber\\
&=mf^{-4m+1}\|\xi\|^{-2m-2}\sum_{s,\mu,t,r,p,q}\xi_s\xi_t\partial_{x_\mu}(f)\bigg(\delta_r^p(-\delta_q^s\delta_\mu^t+\delta_q^\mu\delta_s^t-\delta_q^t\delta_s^\mu)+\delta_r^q(\delta_p^s\delta_\mu^t+\delta_p^\mu\delta_s^t-\delta_p^t\delta_s^\mu)\nonumber\\
&+\delta_r^s(-\delta_q^p\delta_\mu^t+\delta_p^\mu\delta_q^t-\delta_p^t\delta_q^\mu)+\delta_r^\mu(\delta_q^p\delta_s^t+\delta_q^t\delta_s^p-\delta_p^t\delta_s^q)+\delta_r^t(-\delta_q^p\delta_\mu^s+\delta_q^\mu\delta_s^p-\delta_p^\mu\delta_s^q)\bigg){\rm tr}[id]\nonumber\\
&=mf^{-4m+1}\|\xi\|^{-2m-2}\sum_{s,\mu,t,r,p,q}\bigg(-\xi_s\xi_t\partial_{x_t}(f)u_pv_pw_s+\xi_s\xi_s\partial_{x_\mu}(f)u_pv_pw_\mu-\xi_s\xi_t\partial_{x_t}(f)u_pv_pw_t\nonumber\\
&+xi_s\xi_t\partial_{x_t}(f)u_qv_sw_q-\xi_s\xi_s\partial_{x_p}(f)u_qv_pw_q+\xi_s\xi_t\partial_{x_s}(f)u_qv_tw_q-\xi_s\xi_t\partial_{x_t}(f)u_sv_pw_p+\xi_s\xi_t\partial_{x_p}(f)u_sv_pw_t\nonumber\\
&-\xi_s\xi_t\partial_{x_q}(f)u_sv_tw_p+\xi_s\xi_s\partial_{x_r}(f)u_rv_pw_p-\xi_s\xi_t\partial_{x_r}(f)u_rv_sw_t+\xi_s\xi_t\partial_{x_r}(f)u_rv_tw_s-\xi_s\xi_t\partial_{x_s}(f)u_tv_pw_p\nonumber\\
&+\xi_s\xi_t\partial_{x_q}(f)u_tv_sw_q-\xi_s\xi_t\partial_{x_p}(f)u_tv_pw_s\bigg){\rm tr}[id].
\end{align}
Integrating the result in (\ref{444}), we get
\begin{align}
	&\int_{\|\xi\|=1}mf^{-4m+1}\|\xi\|^{-2m-2}\operatorname{tr} \biggl\{ c(u)c(v)c(w)c(\xi)c(df)c(\xi)\biggr\}(x_0)\sigma(\xi)\nonumber\\
&=(1-m)f^{-4m+1}\bigg(g(u,w)v(f)-g(u,v)w(f)-g(v,w)u(f)\bigg){\rm tr}[id]{\rm Vol}(S^{n-1}).
\end{align}

\noindent{\bf (II-e)}
\begin{align}
	&\operatorname{tr} \biggl\{-2mf^{-4m+6}\|\xi\|^{-2m-2}\xi^\mu\partial_{x_\mu}(f^{-4})c(u)c(v)c(w)c(\xi)\biggr\}(x_0)\nonumber\\
	&=-2mf^{-4m+6}\|\xi\|^{-2m-2}\sum_{\mu,r,p,q}\xi_\mu\partial_{x_\mu}(f^{-4})\bigg(\xi_qu_rv_rw_q-\xi_pu_rv_pw_r+\xi_ru_rv_pw_p\bigg){\rm tr}[id].
\end{align}
By direct computations, we have
\begin{align}
	&\int_{\|\xi\|=1}-2mf^{-4m+6}\|\xi\|^{-2m-2}\xi_\mu\partial_{x_\mu}(f^{-4})\operatorname{tr} \biggl\{c(u)c(v)c(w)c(\xi)\biggr\}(x_0)\sigma(\xi)\nonumber\\
	&=f^{-4m+6}\bigg(g(u,w)v(f^{-4})-g(u,v)w(f^{-4})-g(v,w)u(f^{-4})\bigg){\rm tr}[id]{\rm Vol}(S^{n-1})\nonumber\\
&=-4f^{-4m+1}\bigg(g(u,w)v(f)-g(u,v)w(f)-g(v,w)u(f)\bigg){\rm tr}[id]{\rm Vol}(S^{n-1}).
\end{align}

\noindent{\bf (II-f)}
\begin{align}
	&\operatorname{tr} \biggl\{\sum_{k=0}^{m-2}\sum_{\mu}f^{-4m+6}(-m+k+1)\|\xi\|^{-2m}\xi^\mu\partial_{x_\mu}(f^{-4})c(u)c(v)c(w)c(\xi)\biggr\}(x_0)\nonumber\\
	&=\sum_{k=0}^{m-2}\sum_{\mu,r,p,q}f^{-4m+6}(-m+k+1)\|\xi\|^{-2m}\xi^\mu\partial_{x_\mu}(f^{-4})\bigg(\xi_qu_rv_rw_q-\xi_pu_rv_pw_r+\xi_ru_rv_pw_p\bigg){\rm tr}[id].
\end{align}
Also, straight forward computations yield
\begin{align}
&\int_{\|\xi\|=1}\operatorname{tr} \biggl\{\sum_{k=0}^{m-2}\sum_{\mu}f^{-4m+6}(-m+k+1)\|\xi\|^{-2m}\xi_\mu\partial_{x_\mu}(f^{-4})c(u)c(v)c(w)c(\xi)\biggr\}(x_0)\sigma(\xi)\nonumber\\
&=(m-1)f^{-4m+1}\bigg(g(u,w)v(f)-g(u,v)w(f)-g(v,w)u(f)\bigg){\rm tr}[id]{\rm Vol}(S^{n-1}).
\end{align}

Finally, we get
\begin{align}
	&\int_{\|\xi\|=1} \operatorname{tr}\bigg(c(u)c(v)c(w)\sigma_{-2 m-1}\left((f(D+c(T))f)^{-2 m}\right)\sigma_{1}(f(D+c(T))f)(x_0)\bigg) \sigma(\xi)\nonumber\\
	&=\bigg\{-\frac{9}{2}f^{-4m+2}T(u,v,w)+(m-2)f^{-4m+1}\bigg(g(u,w)v(f)+g(v,w)u(f)-g(u,v)w(f)\bigg)\bigg\}{\rm tr}[id]{\rm Vol}(S^{n-1}).
\end{align}

\noindent{\bf (III)} For $-\sqrt{-1} c(u)c(v)c(w)\sum_{j=1}^{2m} \partial_{\xi_{j}}\left[\sigma_{-2 m}\left((f(D+c(T))f)^{-2 m}\right)\right] \partial_{x_{j}}\left[\sigma_{1}(f(D+c(T))f)\right]$:

\begin{align}\label{2-2m-1}
	&-\sqrt{-1} c(u)c(v)c(w) \sum_{j=1}^{2m} \partial_{\xi_{j}}\left[\sigma_{-2 m}\left((f(D+c(T))f)^{-2 m}\right)\right] \partial_{x_{j}}\left[\sigma_{1}(f(D+c(T))f)\right]\nonumber\\
	&=-4mf^{-4m+1}\|\xi\|^{-2m-2}\partial_{x_\alpha}(f)\xi^\alpha c(u)c(v)c(w)c(\xi)(x_0)\nonumber\\
&-2mf^{-4m+2}\|\xi\|^{-2m-2}\xi^\alpha  c(u)c(v)c(w)\partial_{x_\alpha}[c(\xi)](x_0)\nonumber\\
&=-4mf^{-4m+1}\|\xi\|^{-2m-2}\partial_{x_\alpha}(f)\xi_\alpha c(u)c(v)c(w)c(\xi).
\end{align}
By the relation of the Clifford action and $\operatorname{tr}(AB) =\operatorname{tr}(BA)$, we have the equality:
\begin{align}
	&\operatorname{tr} \biggl\{-\sqrt{-1} c(u)c(v)c(w) \sum_{j=1}^{2m} \partial_{\xi_{j}}\left[\sigma_{-2 m}\left((f(D+c(T))f)^{-2 m}\right)\right] \partial_{x_{j}}\left[\sigma_{1}(f(D+c(T))f)\right]\biggr\}(x_0)\nonumber\\
	&=-4mf^{-4m+1}\|\xi\|^{-2m-2}\partial_{x_\alpha}(f)\xi_\alpha \operatorname{tr}\biggl(c(u)c(v)c(w)c(\xi)\biggr)(x_0)\nonumber\\
&=-4mf^{-4m+1}\|\xi\|^{-2m-2}\partial_{x_\alpha}(f)\bigg(\xi_\alpha\xi_qu_rv_rw_q-\xi_\alpha\xi_pu_rv_pw_r+\xi_\alpha\xi_ru_rv_pw_p\bigg){\rm tr}[id]
\end{align}
Moreover, in the same way, we have
\begin{align}
	&\int_{\|\xi\|=1}\operatorname{tr} \biggl\{-\sqrt{-1} c(u)c(v)c(w) \sum_{j=1}^{2m} \partial_{\xi_{j}}\left[\sigma_{-2 m}\left((f(D+c(T))f)^{-2 m}\right)\right] \partial_{x_{j}}\left[\sigma_{1}(f(D+c(T))f)\right]\biggr\}(x_0)\sigma(\xi)\nonumber\\
	&=-2f^{-4m+1}\bigg(\partial_{x_\alpha}(f)u_rv_rw_\alpha-\partial_{x_\alpha}(f)u_rv_\alpha w_r+\partial_{x_\alpha}(f)u_\alpha v_p w_p\bigg){\rm tr}[id]{\rm Vol}(S^{n-1})\nonumber\\
&=2f^{-4m+1}\bigg(g(u,w)v(f)-g(u,v)w(f)-g(v,w)u(f)\bigg){\rm tr}[id]{\rm Vol}(S^{n-1}).
\end{align}
Finally, summing up the results in {\bf (I)}-{\bf (III)} to get
\begin{align}\label{z2}
&	\mathscr{S}_{f(D+c(T))f}\bigg(c(u),c(v),c(w)\bigg)\nonumber\\
	&=2^{m} \frac{2 \pi^{m}}{\Gamma\left(m\right)}\int_{M}\bigg\{-3f^{-4m+2}T(u,v,w)+mf^{-4m+1}\bigg(g(u,w)v(f)-g(v,w)u(f)-g(u,v)w(f)\bigg)\bigg\}d{\rm Vol}_M.
\end{align}
Hence, Theorem \ref{thm} holds.

\end{proof}
\subsection{The spectral torsion for the rescaled Dirac operator $f(D+\sqrt{-1}c(X))f$}
Let $\mathbb{A}=\sqrt{-1}c(X),$  then the rescaled Dirac operator $f(D+\sqrt{-1}c(X))f$ is defined as:
\begin{align}\label{mrp1}
f(D+\sqrt{-1}c(X))f	&=f\bigg[\sum^n_{i=1}c(e_i)\bigg(e_i-\frac{1}{4}\sum_{s,t}\omega_{s,t}
(e_i)c(e_s)c(e_t)\bigg)+\sqrt{-1}c(X)\bigg]f,
\end{align}
where $c(X)=\sum_{\alpha=1}^nX_\alpha c(e_\alpha).$

By the relation of the Clifford action and ${\rm tr}(AB) ={\rm tr}(BA)$, we have the following lemma.
\begin{lem}\label{mrpppp}
The following identity holds:
\begin{align}
&{\rm tr}\bigg(c(u)c(v)c(w)c(X)\bigg)=\bigg(g(u,v)g(w,X)-g(u,w)g(v,X)+g(v,w)g(u,X)\bigg)
{\rm tr}[id].
\end{align}
\end{lem}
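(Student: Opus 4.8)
The plan is to reduce the statement to the trace of a product of four Clifford generators, exactly as was done for $c(df^{3})$ in the proof of Lemma \ref{pppp}(2). First I would expand each one-form in the fixed orthonormal frame, writing $c(u)=\sum_{r}u_{r}c(e_{r})$, $c(v)=\sum_{p}v_{p}c(e_{p})$, $c(w)=\sum_{q}w_{q}c(e_{q})$ and $c(X)=\sum_{\alpha}X_{\alpha}c(e_{\alpha})$, so that by linearity of the trace
\begin{align*}
{\rm tr}\bigl(c(u)c(v)c(w)c(X)\bigr)=\sum_{r,p,q,\alpha}u_{r}v_{p}w_{q}X_{\alpha}\,{\rm tr}\bigl(c(e_{r})c(e_{p})c(e_{q})c(e_{\alpha})\bigr).
\end{align*}

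Next I would establish the elementary identity
\begin{align*}
{\rm tr}\bigl(c(e_{r})c(e_{p})c(e_{q})c(e_{\alpha})\bigr)=\bigl(\delta_{rp}\delta_{q\alpha}-\delta_{rq}\delta_{p\alpha}+\delta_{r\alpha}\delta_{pq}\bigr){\rm tr}[id],
\end{align*}
which is precisely the combinatorial rule already invoked in the proof of Lemma \ref{pppp}(2). It follows from the Clifford relations $c(e_{i})c(e_{j})+c(e_{j})c(e_{i})=-2\delta_{ij}$ together with ${\rm tr}(AB)={\rm tr}(BA)$: using cyclicity to move $c(e_{\alpha})$ to the front and then anticommuting it back past the other three generators produces the three $\delta$-terms together with one copy of the original trace carrying the sign $(-1)^{3}=-1$, so that $2\,{\rm tr}(c(e_{r})c(e_{p})c(e_{q})c(e_{\alpha}))$ equals a sum of products $\delta\cdot{\rm tr}(c(e_{i})c(e_{j}))$; the traces of odd products of generators that appear along the way vanish by the same cyclicity argument, and ${\rm tr}(c(e_{i})c(e_{j}))=-\delta_{ij}{\rm tr}[id]$.

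Substituting this back and carrying out the four contractions: $\delta_{rp}\delta_{q\alpha}$ contracts to $g(u,v)g(w,X)$, $\delta_{rq}\delta_{p\alpha}$ to $g(u,w)g(v,X)$, and $\delta_{r\alpha}\delta_{pq}$ to $g(u,X)g(v,w)$, which assembles into the asserted expression
\begin{align*}
\bigl(g(u,v)g(w,X)-g(u,w)g(v,X)+g(v,w)g(u,X)\bigr){\rm tr}[id].
\end{align*}
I do not anticipate any genuine obstacle: the argument is a verbatim transcription of Lemma \ref{pppp}(2) with the functions $\partial_{x_{\mu}}(f^{3})$ replaced by the components $X_{\mu}$, and the only point deserving an explicit word is the vanishing of the trace of an odd number of Clifford generators, which is needed to discard the unwanted intermediate term.
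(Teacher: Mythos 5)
Your proposal is correct and matches the paper's own proof, which — as you note — is a verbatim transcription of Lemma \ref{pppp}(2): expand by linearity, use ${\rm tr}\bigl(c(e_r)c(e_p)c(e_q)c(e_\alpha)\bigr)=\bigl(\delta_{rp}\delta_{q\alpha}-\delta_{rq}\delta_{p\alpha}+\delta_{r\alpha}\delta_{pq}\bigr){\rm tr}[id]$, and contract. The extra justification you supply for that four-generator trace via cyclicity and anticommutation is sound (though in this reduction only even-length traces $\delta\cdot{\rm tr}(c(e_i)c(e_j))$ ever arise, so the aside about odd products is superfluous).
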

\begin{proof}
\begin{align}\label{mrt12}
&{\rm tr}\bigg(c(u)c(v)c(w)c(X)\bigg)\nonumber\\
&=\sum_{\alpha,r,p,q}{\rm tr}\bigg(c(e_r)c(e_p)c(e_q)c(e_\alpha)\bigg)\nonumber\\
&=\sum_{\alpha,r,p,q}\bigg(X_\alpha u_rv_pw_q(\delta^r_p\delta_q^\alpha-\delta_r^q\delta_\alpha^p+\delta_r^\alpha\delta_p^q)\bigg){\rm tr}[id]\nonumber\\
&=\sum_{\alpha,r,p,q}\bigg(X_qu_rv_rw_q-X_pu_qv_pw_q+X_ru_rv_pw_p\bigg){\rm tr}[id]\nonumber\\
&=\bigg(g(u,v)g(w,X)-g(u,w)g(v,X)+g(v,w)g(u,X)\bigg)
{\rm tr}[id].
\end{align}
\end{proof}
Now we arrive to our second main result.
 \begin{thm}\label{mrthm}
 	Let $M$ be an $n=2m$ dimensional ($n\geq 3$) oriented compact spin Riemannian manifold, for the rescaled Dirac operator with the trilinear Clifford multiplication by functional of differential one-forms $c(u),c(v),c(w),$ the spectral torsion for $f(D+\sqrt{-1}c(X))f$ equals to
 \begin{align}
 	&\mathscr{S}_{f(D+\sqrt{-1}c(X))f}\bigg(c(u),c(v),c(w)\bigg)\nonumber\\
 &=\;2^{m} \frac{2 \pi^{m}}{\Gamma\left(m\right)}\int_{M}mf^{-4m+1}\bigg(g(u,w)v(f)-g(v,w)u(f)-g(u,v)w(f)\bigg)d{\rm Vol}_M.
 \end{align}
 \end{thm}

\begin{proof}
Similar to Section 4.1, we need to calculate the following three parts.

For $c(u)c(v)c(w)\sigma_{-2 m}\left((f(D+\sqrt{-1}c(X))f)^{-2 m}\right)\sigma_{0}(f(D+\sqrt{-1}c(X))f)(x_{0})$:
\begin{align}\label{mr0-2m}
&c(u)c(v)c(w)\sigma_{-2 m}\left((f(D+\sqrt{-1}c(X))f)^{-2 m}\right)\sigma_{0}(f(D+\sqrt{-1}c(X))f)(x_{0})\nonumber\\
&=f^{-4m}\|\xi\|^{-2m}c(u)c(v)c(w)\left(-\frac{1}{4}f\sum_{ist}w_{st}(e_i)c(e_i)c(e_s)c(e_t)f\right)(x_0)\nonumber\\
&+\sqrt{-1}f^{-4m}\|\xi\|^{-2m}c(u)c(v)c(w)\left(fc(X)f\right)(x_0)\nonumber\\
&=\sqrt{-1}f^{-4m+2}\|\xi\|^{-2m}c(u)c(v)c(w)c(X).
\end{align}
then by lemma \ref{mrpppp}, we get
\begin{align}\label{mrm8}
	&\int_{\|\xi\|=1}\operatorname{tr}\biggl\{ c(u)c(v)c(w)\sigma_{-2 m}\left((f(D+\sqrt{-1}c(X))f)^{-2 m}\right)\sigma_{0}(f(D+\sqrt{-1}c(X))f) \biggr\}(x_0)\sigma(\xi)\nonumber\\
&=\int_{\|\xi\|=1}\sqrt{-1}f^{-4m+2}\|\xi\|^{-2m}\operatorname{tr}\biggl\{ c(u)c(v)c(w)c(X)\biggr\}\sigma(\xi)\nonumber\\
&=\sqrt{-1}f^{-4m+2}\bigg(g(u,v)g(w,X)-g(u,w)g(v,X)+g(v,w)g(u,X)\bigg){\rm tr}[id]{\rm Vol}(S^{n-1}).
\end{align}

For $c(u)c(v)c(w)\sigma_{-2 m-1}\left((f(D+\sqrt{-1}c(X))f)^{-2 m}\right)\sigma_{1}(f(D+\sqrt{-1}c(X))f)(x_0)$:
\begin{align}\label{mr2-2m-2}
	&c(u)c(v)c(w)\sigma_{-2 m-1}\left((f(D+\sqrt{-1}c(X))f)^{-2 m}\right)\sigma_{1}(f(D+\sqrt{-1}c(X))f)(x_0)\nonumber\\
	&=\sqrt{-1}mf^{-4m+2}\|\xi\|^{-2m-2}\sum_{\mu}\xi_\mu c(u)c(v)c(w)c(\partial^\mu)c(X)c(\xi)(x_0)\nonumber\\
&+\sqrt{-1}mf^{-4m+2}\|\xi\|^{-2m-2}\sum_{\mu}\xi_\mu c(u)c(v)c(w)c(X)c(\partial^\mu)c(\xi)(x_0)\nonumber\\
	&-mf^{-4m-1}\|\xi\|^{-2m} c(u)c(v)c(w)c(df^3)(x_0)\nonumber\\
&+mf^{-4m+1}\|\xi\|^{-2m-2} c(u)c(v)c(w)c(\xi)c(df)c(\xi)(x_0)\nonumber\\
&-2mf^{-4m+6}\|\xi\|^{-2m-2}\sum_{\mu}\xi_\mu\partial_{x_\mu}(f^{-4})c(u)c(v)c(w)c(\xi)(x_0)\nonumber\\
&+f^{-4m+6}\sum_{k=0}^{m-2}\sum_{\mu}(-m+k+1)\|\xi\|^{-2m}\xi_\mu\partial_{x_\mu}(f^{-4})c(u)c(v)c(w)c(\xi)(x_0).
\end{align}
The results of the first two items are as follows, and the results of the last four items are the same as those in Section 4.1.

\noindent{\bf (1)}

\begin{align}\label{mrt7}
	&\int_{\|\xi\|=1}mf^{-4m+2}\|\xi\|^{-2m-2}\sum_{\mu}\xi_\mu \operatorname{tr}\bigg(c(u)c(v)c(w)c(\partial^\mu)c(X)c(\xi)\bigg)\sigma(\xi)\nonumber\\
&=\int_{\|\xi\|=1}mf^{-4m+2}\|\xi\|^{-2m-2}X_\alpha\operatorname{tr}\biggl\{c(u)c(v)c(w)\sum_{\mu,\alpha}\xi_\mu c(\partial^\mu)c(e_\alpha)c(\xi)\biggr\}\sigma(\xi)\nonumber\\
&=\int_{\|\xi\|=1}mf^{-4m+2}\|\xi\|^{-2m-2}\sum_{\mu,\alpha,r,p,q,s}\xi_\mu \xi_su_rv_pw_qX_\alpha\operatorname{tr}\biggl\{c(e_r)c(e_p)c(e_q) c(\partial^\mu)c(e_\alpha)c(e_s)\biggr\}\sigma(\xi)\nonumber\\
&=mf^{-4m+2}\|\xi\|^{-2m-2}\times\frac{1}{2m}{\rm Vol}(S^{n-1})\sum_{\mu,\alpha,r,p,q,s}\delta^\mu_su_rv_pw_qX_\alpha\operatorname{tr}\biggl\{c(e_r)c(e_p)c(e_q) c(e_\mu)c(e_\alpha)c(e_s)\biggr\}\nonumber\\
&=\frac{1}{2}f^{-4m+2}\|\xi\|^{-2m-2}{\rm Vol}(S^{n-1})\sum_{\alpha,r,p,q,s}u_rv_pw_qX_\alpha\operatorname{tr}\biggl\{c(e_r)c(e_p)c(e_q) c(e_s)c(e_\alpha)c(e_s)\biggr\}\nonumber\\
\end{align}
where
\begin{align}\label{mrqq}
&\operatorname{tr}\biggl\{c(e_r)c(e_p)c(e_q) c(e_s)c(e_\alpha)c(e_s)\biggr\}\nonumber\\
&=-2\delta^\alpha_s{\rm tr}\bigg(c(e_r)c(e_p)c(e_q)c(e_s)\bigg)-{\rm tr}\bigg(c(e_r)c(e_p)c(e_q)c(e_\alpha)c(e_s)c(e_s)\bigg)\nonumber\\
&=2(m-1){\rm tr}\bigg(c(e_r)c(e_p)c(e_q)c(e_\alpha)\bigg).
\end{align}
Substituting (\ref{mrqq}) into (\ref{mrt7}), we have
\begin{align}\label{mrt17}
	&\int_{\|\xi\|=1}\sqrt{-1}mf^{-4m+2}\|\xi\|^{-2m-2}\sum_{\mu}\xi_\mu \operatorname{tr}\bigg(c(u)c(v)c(w)c(\partial^\mu)c(X)c(\xi)\bigg)\nonumber\\
&=\frac{\sqrt{-1}}{2}f^{-4m+2}{\rm Vol}(S^{n-1}){\rm tr}\bigg(c(u)c(v)c(w)c(X)\bigg)\nonumber\\
&=\sqrt{-1}(m-1)\bigg(g(u,v)g(w,X)-g(u,w)g(v,X)+g(v,w)g(u,X)\bigg){\rm tr}[id]{\rm Vol}(S^{n-1}).
\end{align}
\noindent{\bf (2)}Similarly, we obtain
\begin{align}\label{mrtpp7}
&\int_{\|\xi\|=1}\sqrt{-1}mf^{-4m+2}\|\xi\|^{-2m-2}\sum_{\mu}\xi_\mu \operatorname{tr}\bigg(c(u)c(v)c(w)c(X)c(\partial^\mu)c(\xi)\bigg)\sigma(\xi)\nonumber\\
&=\int_{\|\xi\|=1}\sqrt{-1}mf^{-4m+2}\|\xi\|^{-2m-2}X_\alpha\operatorname{tr}\biggl\{c(u)c(v)c(w)\sum_{\mu,\alpha}\xi_\mu c(e_\alpha)c(\partial^\mu)c(\xi)\biggr\}\sigma(\xi)\nonumber\\
&=\int_{\|\xi\|=1}\sqrt{-1}mf^{-4m+2}\|\xi\|^{-2m-2}\sum_{\mu,\alpha,r,p,q,s}\xi_\mu \xi_su_rv_pw_qX_\alpha\operatorname{tr}\biggl\{c(e_r)c(e_p)c(e_q) c(e_\alpha)c(\partial^\mu)c(e_s)\biggr\}\sigma(\xi)\nonumber\\
&=\sqrt{-1}mf^{-4m+2}\|\xi\|^{-2m-2}\times\frac{1}{2m}{\rm Vol}(S^{n-1})\sum_{\mu,\alpha,r,p,q,s}\delta^\mu_su_rv_pw_qX_\alpha\operatorname{tr}\biggl\{c(e_r)c(e_p)c(e_q) c(e_\alpha)c(e_\mu)c(e_s)\biggr\}\nonumber\\
&=\frac{\sqrt{-1}}{2}f^{-4m+2}\|\xi\|^{-2m-2}{\rm Vol}(S^{n-1})\sum_{\alpha,r,p,q,s}u_rv_pw_qX_\alpha\operatorname{tr}\biggl\{c(e_r)c(e_p)c(e_q) c(e_\alpha)c(e_s)c(e_s)\biggr\}\nonumber\\
&=-\sqrt{-1}mf^{-4m+2}{\rm Vol}(S^{n-1}){\rm tr}\bigg(c(u)c(v)c(w)c(X)\bigg)\nonumber\\
&=-\sqrt{-1}mf^{-4m+2}\bigg(g(u,v)g(w,X)-g(u,w)g(v,X)+g(v,w)g(u,X)\bigg){\rm tr}[id]{\rm Vol}(S^{n-1}).
\end{align}
Finally, we get
\begin{align}
	&\int_{\|\xi\|=1} \operatorname{tr}\bigg(c(u)c(v)c(w)\sigma_{-2 m-1}\left((f(D+\sqrt{-1}c(X))f)^{-2 m}\right)\sigma_{1}(f(D+\sqrt{-1}c(X))f)(x_0)\bigg) \sigma(\xi)\nonumber\\
	&=(m-2)f^{-4m+1}\bigg(g(u,w)v(f)+g(v,w)u(f)-g(u,v)w(f)\bigg){\rm tr}[id]{\rm Vol}(S^{n-1}).
\end{align}
The results of third iterm in (\ref{ABD}) is the same as in section 4.1. Thus, summing up the above results, we get
\begin{align}\label{mrz2}
&	\mathscr{S}_{f(D+\sqrt{-1}c(X))f}\bigg(c(u),c(v),c(w)\bigg)\nonumber\\
	&=2^{m} \frac{2 \pi^{m}}{\Gamma\left(m\right)}\int_{M}mf^{-4m+1}\bigg(g(u,w)v(f)-g(v,w)u(f)-g(u,v)w(f)\bigg)d{\rm Vol}_M.
\end{align}
Hence, Theorem \ref{mrthm} holds.
\end{proof}

\subsection{The spectral torsion for the rescaled Dirac operator $f(D+c(X)\gamma )f$}

 Let us begin by a technical lemma showing that the product of the grading $\gamma$ by any
 Euclidean Dirac matrix results. The grading operator $\gamma$ denoted by

 \begin{align}
 \gamma=(\sqrt{-1})^m\prod_{j=1}^{2m}c(e_j).
 \end{align}
 In the terms of the orthonormal frames ${e_i}(1\leq i,j\leq n)$ on $TM$, we have $\gamma=(\sqrt{-1})^mc(e_1)c(e_2)\cdot\cdot\cdot c(e_n)$.
\begin{defn}\cite{YY}
 Suppose $V$ is a super vector space, and $\gamma$ is its super structure. If
 $\phi \in End(V)$, let $tr (\phi)$ be the trace of $\phi$, then define
 \begin{align}
 {\rm Str}(\phi)={\rm tr}(\phi\circ\gamma),
 \end{align}
where ${\rm tr}$ and $ {\rm Str}$ are called the trace and the super trace of $\phi$ respectively.
 \end{defn}
 \begin{lem}\cite{YY}
 The super trace (function) $Str:End_\mathbb{C}(S(2m))\rightarrow \mathbb{C}$ is a complex linear
 map satisfying
 \begin{align}
{\rm Str}(c(e_{i1})c(e_{i2})\cdot\cdot\cdot c(e_{iq}))=\left\{
                                                     \begin{array}{ll}
                                                       0, & if q< 2m; \\
                                                      \frac{2^m}{(\sqrt{-1})^m}, & if q= 2m ,
                                                     \end{array}
                                                   \right.
 \end{align}
where $1\leq i_1,i_2\cdot\cdot\cdot i_q\leq2m.$
 \end{lem}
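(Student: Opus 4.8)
The statement is a purely representation–theoretic fact about the $2^m$-dimensional complex spinor module $S(2m)$, and the plan is to deduce it from two elementary ingredients together with the Clifford relations \eqref{ali1}. The first ingredient is $\,{\rm tr}({\rm id})=\dim S(2m)=2^m$. The second is the vanishing $\,{\rm tr}\bigl(c(e_{j_1})\cdots c(e_{j_k})\bigr)=0$ for every product of $k\geq 1$ pairwise distinct Clifford generators (in particular $\,{\rm tr}\,\gamma=0$), which I would prove by a conjugation trick: choose $l=j_1$ when $k$ is even and choose any $l\notin\{j_1,\dots,j_k\}$ when $k$ is odd (such an $l$ exists, since an odd $k$ is automatically $<2m$). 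Using that $c(e_l)$ anticommutes with every other generator and that $c(e_l)^2=-1$, one checks that $c(e_l)\bigl(c(e_{j_1})\cdots c(e_{j_k})\bigr)c(e_l)^{-1}=-\,c(e_{j_1})\cdots c(e_{j_k})$, so $\,{\rm tr}(AB)={\rm tr}(BA)$ forces this trace to be its own negative, hence zero.

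Next I would unwind the definition ${\rm Str}(\phi)={\rm tr}(\phi\circ\gamma)$ with $\gamma=(\sqrt{-1})^m c(e_1)c(e_2)\cdots c(e_{2m})$, obtaining
\begin{align}
{\rm Str}\bigl(c(e_{i_1})\cdots c(e_{i_q})\bigr)=(\sqrt{-1})^m\,{\rm tr}\bigl(c(e_{i_1})\cdots c(e_{i_q})\,c(e_1)c(e_2)\cdots c(e_{2m})\bigr).\nonumber
\end{align}
By the Clifford relations the word on the right reduces to a scalar times $c(e_{k_1})\cdots c(e_{k_r})$, where $k_1<\cdots<k_r$ are precisely the indices occurring an odd number of times in the concatenation. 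If $q<2m$ (with the $i_j$ taken distinct and increasing), every $i_j$ occurs twice and cancels, leaving $r=2m-q\geq 1$ surviving generators, so the vanishing above gives ${\rm Str}=0$; the same conclusion persists without the distinctness assumption, because reducing $c(e_{i_1})\cdots c(e_{i_q})$ first can only lower the number of surviving generators below $2m$.

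For $q=2m$ the indices must exhaust $\{1,\dots,2m\}$, and taking them in increasing order gives $c(e_{i_1})\cdots c(e_{i_{2m}})=c(e_1)\cdots c(e_{2m})=:\omega$, so ${\rm Str}(\omega)=(\sqrt{-1})^m\,{\rm tr}(\omega^2)$. The last point is the sign identity $\omega^2=(-1)^m$: bringing each generator of the second copy of $\omega$ leftward to meet and annihilate its partner in the first copy, and using $c(e_i)^2=-1$, produces the accumulated sign $(-1)^{1+2+\cdots+2m}=(-1)^{m(2m+1)}=(-1)^m$. Hence ${\rm Str}(\omega)=(\sqrt{-1})^m(-1)^m 2^m=(\sqrt{-1})^{-m}2^m=2^m/(\sqrt{-1})^m$, which is the asserted value, and $\mathbb{C}$-linearity of ${\rm Str}$ is immediate from that of ${\rm tr}$. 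I expect the only genuinely delicate part to be the sign bookkeeping — in evaluating $\omega^2$ and in reducing the concatenated word — together with fixing once and for all that the $c(e_i)$ in any monomial are listed in increasing index order, so that reordering them introduces no hidden sign.
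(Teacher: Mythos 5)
The lemma is stated in the paper as a citation to \cite{YY} without an internal proof, so there is no in-paper argument to compare against; what you have supplied is a self-contained derivation, and it is correct. Your strategy is the standard one: (i) establish that $\operatorname{tr}\bigl(c(e_{j_1})\cdots c(e_{j_k})\bigr)=0$ whenever $1\le k\le 2m$ and the generators are distinct, via the conjugation trick with a well-chosen $c(e_l)$ and cyclicity of the trace; (ii) observe that multiplying a length-$q$ monomial (with distinct, increasing indices) by $\gamma$ leaves, up to a nonzero scalar, exactly the $2m-q$ complementary generators, so the trace vanishes for $q<2m$; and (iii) for $q=2m$, reduce to $\operatorname{Str}(\omega)=(\sqrt{-1})^m\operatorname{tr}(\omega^2)$ and use $\omega^2=(-1)^m$ together with $\operatorname{tr}(\mathrm{id})=2^m$ to land on $2^m/(\sqrt{-1})^m$. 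The conjugation step is handled carefully in both parities of $k$, and your sign count $(-1)^{1+2+\cdots+2m}=(-1)^{m(2m+1)}=(-1)^m$ agrees with the usual $(-1)^{\binom{2m}{2}}\prod_i c(e_i)^2=(-1)^m$.

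One small remark worth making explicit (you already gesture at it at the end): as written, the lemma is only literally correct under the implicit convention that $i_1<i_2<\cdots<i_q$ are distinct. Without that, repeated indices can make $q=2m$ with the product a scalar rather than $\omega$ (e.g. $c(e_1)^{2m}=(-1)^m\,\mathrm{id}$ has $\operatorname{Str}=0$), and a permuted top monomial changes the value by the sign of the permutation. Your proof uses the increasing-distinct convention consistently, so the argument is sound; just state that convention at the outset rather than in the closing sentence.
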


Let $\mathbb{A}=c(X)\gamma,$  then the rescaled Dirac operator $f(D+c(X)\gamma)f$ is defined as:
\begin{align}\label{rp1}
f(D+c(X)\gamma)f	&=f\bigg[\sum^n_{i=1}c(e_i)\bigg(e_i-\frac{1}{4}\sum_{s,t}\omega_{s,t}
(e_i)c(e_s)c(e_t)\bigg)+c(X)\gamma\bigg]f.
\end{align}
\begin{lem}\cite{WWj}\label{rpppp}
The following identity holds:
\begin{align}
&{\rm tr}\bigg(c(u)c(v)c(w)c(X)\gamma\bigg)=\left\{
                                              \begin{array}{ll}
                                                0, & if~~~~ 2m\neq 4; \\
                                                 -4\langle u^*\wedge v^*\wedge w^*\wedge X^*,e_1^*\wedge e_2^*\wedge e_3^*\wedge e_4^*\rangle , & if~~~~ 2m=4.
                                              \end{array}
                                            \right.
\end{align}
\end{lem}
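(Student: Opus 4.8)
The plan is to pass to the super trace, which is exactly the tool designed for pairings of Clifford products against the chirality element $\gamma$. Since by definition $\mathrm{Str}(\phi)=\mathrm{tr}(\phi\circ\gamma)$, the quantity we must compute is just $\mathrm{Str}\bigl(c(u)c(v)c(w)c(X)\bigr)$. Expanding $c(u)=\sum_r u_r c(e_r)$, and likewise $c(v),c(w),c(X)$, in the fixed orthonormal frame gives
\[
\mathrm{tr}\bigl(c(u)c(v)c(w)c(X)\gamma\bigr)=\sum_{r,p,q,\alpha}u_r v_p w_q X_\alpha\,\mathrm{Str}\bigl(c(e_r)c(e_p)c(e_q)c(e_\alpha)\bigr),
\]
so the whole statement reduces to evaluating the super trace of a product of four Clifford generators.

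First I would reduce each such product: using the Clifford relations $c(e_i)c(e_j)=-c(e_j)c(e_i)$ for $i\neq j$ and $c(e_i)^2=-1$, the product $c(e_r)c(e_p)c(e_q)c(e_\alpha)$ equals $\pm$ a product of pairwise distinct generators of length $0$, $2$, or $4$. By the super trace lemma (together with $\mathrm{Str}(\mathrm{id})=\mathrm{tr}(\gamma)=0$), any such term has vanishing super trace unless its reduced length equals $2m=n$. When $2m\neq 4$ we automatically have $2m\geq 6$, because $n=2m\geq 3$ is even; hence the reduced length is at most $4<n$, every summand vanishes, and the left-hand side is $0$.

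For $2m=4$, the super trace $\mathrm{Str}\bigl(c(e_r)c(e_p)c(e_q)c(e_\alpha)\bigr)$ is nonzero only when $\{r,p,q,\alpha\}=\{1,2,3,4\}$. Writing the indices as a permutation $\pi\in S_4$ of $(1,2,3,4)$ and using antisymmetry of the Clifford product together with $\mathrm{Str}\bigl(c(e_1)c(e_2)c(e_3)c(e_4)\bigr)=2^m/(\sqrt{-1})^m=4/(-1)=-4$, one obtains $\mathrm{Str}\bigl(c(e_{\pi(1)})c(e_{\pi(2)})c(e_{\pi(3)})c(e_{\pi(4)})\bigr)=-4\,\mathrm{sgn}(\pi)$. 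Summing,
\[
\mathrm{tr}\bigl(c(u)c(v)c(w)c(X)\gamma\bigr)=-4\sum_{\pi\in S_4}\mathrm{sgn}(\pi)\,u_{\pi(1)}v_{\pi(2)}w_{\pi(3)}X_{\pi(4)},
\]
and the last sum is precisely the coefficient of $e_1^*\wedge e_2^*\wedge e_3^*\wedge e_4^*$ in $u^*\wedge v^*\wedge w^*\wedge X^*$, i.e. $\langle u^*\wedge v^*\wedge w^*\wedge X^*,\,e_1^*\wedge e_2^*\wedge e_3^*\wedge e_4^*\rangle$, yielding the claimed formula. The only delicate point is the sign bookkeeping, namely keeping track of the permutation sign produced when sorting the four indices (this is what converts the sum into the wedge-product pairing) and checking that the super-trace normalization $2^m/(\sqrt{-1})^m$ at $m=2$ reproduces exactly the factor $-4$; the rest is the routine expansion above.
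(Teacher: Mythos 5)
Your proof is correct. Note that the paper itself does not prove this identity but simply cites it from reference [WWj]; nevertheless the route you take via the super trace is the natural one, and it matches the machinery the paper itself sets up (the definition $\mathrm{Str}(\phi)=\mathrm{tr}(\phi\circ\gamma)$ and the super-trace lemma giving $0$ for $q<2m$ and $2^m/(\sqrt{-1})^m$ for $q=2m$). All the key points are handled: you reduce every summand to a product of pairwise distinct generators of length $0$, $2$, or $4$; you observe that since $n=2m\geq 4$ is even, the case $2m\neq 4$ forces $2m\geq 6$ so the reduced length is always below $n$ and every super trace vanishes; and for $2m=4$ you correctly extract the antisymmetric dependence on $(r,p,q,\alpha)$ and the normalization $2^2/(\sqrt{-1})^2=-4$, which together reconstitute the determinant, i.e.\ the pairing $\langle u^*\wedge v^*\wedge w^*\wedge X^*, e_1^*\wedge e_2^*\wedge e_3^*\wedge e_4^*\rangle$. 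One small presentational caution: the super-trace lemma as quoted in the paper does not say explicitly that the indices $i_1,\dots,i_q$ must be pairwise distinct and increasing, but that is the only reading under which it is true, and you correctly use antisymmetry to propagate the sign $\mathrm{sgn}(\pi)$ to arbitrary orderings; it would be worth stating that assumption explicitly so the step from $\mathrm{Str}(c(e_1)c(e_2)c(e_3)c(e_4))=-4$ to $\mathrm{Str}(c(e_{\pi(1)})\cdots c(e_{\pi(4)}))=-4\,\mathrm{sgn}(\pi)$ is self-contained.
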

Now we arrive to our third main result.
 \begin{thm}\label{rthm}
 	Let $M$ be an $n=2m$ dimensional ($n\geq 3$) oriented compact spin Riemannian manifold, for the rescaled Dirac operator with the trilinear Clifford multiplication by functional of differential one-forms $c(u),c(v),c(w),$ the spectral torsion for $f(D+c(X)\gamma)f$ equals to  the following identities.

When $2m\neq 4,$
 \begin{align}
 	&\mathscr{S}_{f(D+c(X)\gamma)f}\bigg(c(u),c(v),c(w)\bigg)\nonumber\\
 &=\;2^{m} \frac{2 \pi^{m}}{\Gamma\left(m\right)}\int_{M}mf^{-4m+1}\bigg(g(u,w)v(f^3)-g(v,w)u(f^3)-g(u,v)w(f^3)\bigg)d{\rm Vol}_M.
 \end{align}
When $2m=4,$
\begin{align}
 	&\mathscr{S}_{f(D+c(X)\gamma)f}\bigg(c(u),c(v),c(w)\bigg)\nonumber\\
 &=\;16 \pi^{2}\int_{M}f^{-6}u^*\wedge v^*\wedge w^*\wedge X^*+8 \pi^{2}\int_{M}2f^{-7}\bigg(g(u,w)v(f^3)-g(v,w)u(f^3)-g(u,v)w(f^3)\bigg)d{\rm Vol}_M.
 \end{align}
 \end{thm}
\begin{proof}
 For $c(u)c(v)c(w)\sigma_{-2 m}\left((f(D+c(X)\gamma)f)^{-2 m}\right)\sigma_{0}(f(D+c(X)\gamma)f)(x_{0})$:
\begin{align}\label{r0-2m}
&c(u)c(v)c(w)\sigma_{-2 m}\left((f(D+c(X)\gamma)f)^{-2 m}\right)\sigma_{0}(f(D+c(X)\gamma)f)(x_{0})\nonumber\\
&=f^{-4m}\|\xi\|^{-2m}c(u)c(v)c(w)\left(-\frac{1}{4}f\sum_{ist}w_{st}(e_i)c(e_i)c(e_s)c(e_t)f\right)(x_0)\nonumber\\
&+f^{-4m}\|\xi\|^{-2m}c(u)c(v)c(w)\left(fc(X)\gamma f\right)(x_0)\nonumber\\
&=f^{-4m+2}\|\xi\|^{-2m}c(u)c(v)c(w)c(X)\gamma.
\end{align}
then by lemma \ref{rpppp}, we get
\begin{align}\label{rm8}
	&\int_{\|\xi\|=1}\operatorname{tr}\biggl\{ c(u)c(v)c(w)\sigma_{-2 m}\left((f(D+c(X)\gamma)f)^{-2 m}\right)\sigma_{0}(f(D+c(X)\gamma)f) \biggr\}(x_0)\sigma(\xi)\nonumber\\
&=\int_{\|\xi\|=1} f^{-4m+2}\|\xi\|^{-2m}\operatorname{tr}\biggl\{ c(u)c(v)c(w)c(X)\gamma\biggr\}\sigma(\xi)\nonumber\\
&=\left\{
                                              \begin{array}{ll}
                                                0, & if~~~~2m\neq 4; \\
                                                 -4f^{-6} u^*\wedge v^*\wedge w^*\wedge X^* {\rm Vol}(S^{3}), & if ~~~~2m=4.
                                              \end{array}
                                            \right.
\end{align}
For $c(u)c(v)c(w)\sigma_{-2 m-1}\left((f(D+c(X)\gamma)f)^{-2 m}\right)\sigma_{1}(f(D+c(X)\gamma)f)(x_0)$:
\begin{align}\label{r2-2m-2}
	&c(u)c(v)c(w)\sigma_{-2 m-1}\left((f(D+c(X)\gamma)f)^{-2 m}\right)\sigma_{1}(f(D+c(X)\gamma)f)(x_0)\nonumber\\
	&=mf^{-4m+2}\|\xi\|^{-2m-2}\sum_{\mu}\xi_\mu c(u)c(v)c(w)c(\partial^\mu)c(X)\gamma c(\xi)(x_0)\nonumber\\
&+mf^{-4m+2}\|\xi\|^{-2m-2}\sum_{\mu}\xi_\mu c(u)c(v)c(w)c(X)\gamma c(\partial^\mu)c(\xi)(x_0)\nonumber\\
	&-mf^{-4m-1}\|\xi\|^{-2m} c(u)c(v)c(w)c(df^3)(x_0)\nonumber\\
&+mf^{-4m+1}\|\xi\|^{-2m-2} c(u)c(v)c(w)c(\xi)c(df)c(\xi)(x_0)\nonumber\\
&-2mf^{-4m+6}\|\xi\|^{-2m-2}\sum_{\mu}\xi_\mu\partial_{x_\mu}(f^{-4})c(u)c(v)c(w)c(\xi)(x_0)\nonumber\\
&+f^{-4m+6}\sum_{k=0}^{m-3}\sum_{\mu}(-m+k+1)\|\xi\|^{-2m}\xi_\mu\partial_{x_\mu}(f^{-4})c(u)c(v)c(w)c(\xi)(x_0).
\end{align}
The results of the first two items are as follows, and the results of the last four items are the same as those
in Section 4.1.

\noindent{\bf (1)}
\begin{align}\label{rt7}
	&\int_{\|\xi\|=1}mf^{-4m+2}\|\xi\|^{-2m-2}\sum_{\mu}\xi_\mu \operatorname{tr}\bigg(c(u)c(v)c(w)c(\partial^\mu)c(X)\gamma c(\xi)\bigg)\sigma(\xi)\nonumber\\
&=\int_{\|\xi\|=1}mf^{-4m+2}\|\xi\|^{-2m-2}\sum_{\mu,\alpha,r,p,q,s}\xi_\mu \xi_su_rv_pw_qX_\alpha\operatorname{tr}\biggl\{c(e_r)c(e_p)c(e_q) c(e_\mu)c(e_\alpha)\gamma c(e_s)\biggr\}\sigma(\xi)\nonumber\\
&=-\int_{\|\xi\|=1}mf^{-4m+2}\|\xi\|^{-2m-2}\sum_{\mu,\alpha,r,p,q,s}\xi_\mu \xi_su_rv_pw_qX_\alpha\operatorname{tr}\biggl\{c(e_r)c(e_p)c(e_q) c(e_\mu)c(e_\alpha)c(e_s)\gamma \biggr\}\sigma(\xi)\nonumber\\
&=-mf^{-4m+2}\|\xi\|^{-2m-2}\times\frac{1}{2m}{\rm Vol}(S^{n-1})\sum_{\mu,\alpha,r,p,q,s}\delta^\mu_su_rv_pw_qX_\alpha\operatorname{tr}\biggl\{c(e_r)c(e_p)c(e_q) c(e_\mu)c(e_\alpha)c(e_s)\gamma\biggr\}\nonumber\\
&=-mf^{-4m+2}\|\xi\|^{-2m-2}\times\frac{1}{2m}{\rm Vol}(S^{n-1})\sum_{\alpha,r,p,q,s}u_rv_pw_qX_\alpha\operatorname{tr}\biggl\{c(e_r)c(e_p)c(e_q) c(e_s)c(e_\alpha)c(e_s)\gamma\biggr\},
\end{align}
where
\begin{align}\label{rqq}
&\operatorname{tr}\biggl\{c(e_r)c(e_p)c(e_q) c(e_s)c(e_\alpha)c(e_s)\gamma\biggr\}\nonumber\\
&=-2\delta^\alpha_s{\rm tr}\bigg(c(e_r)c(e_p)c(e_q)c(e_s)\gamma\bigg)-{\rm tr}\bigg(c(e_r)c(e_p)c(e_q)c(e_\alpha)c(e_s)c(e_s)\gamma\bigg)\nonumber\\
&=2(m-1){\rm tr}\bigg(c(e_r)c(e_p)c(e_q)c(e_\alpha)\gamma\bigg).
\end{align}
Substituting (\ref{rqq}) into (\ref{rt7}), we have
\begin{align}\label{rt17}
	&\int_{\|\xi\|=1}mf^{-4m+2}\|\xi\|^{-2m-2}\sum_{\mu}\xi_\mu \operatorname{tr}\bigg(c(u)c(v)c(w)c(\partial^\mu)c(X)\gamma c(\xi)\bigg)\sigma(\xi)\nonumber\\
&=-mf^{-4m+2}\frac{1}{2m}{\rm Vol}(S^{n-1})2(m-1){\rm tr}\bigg(c(u)c(v)c(w)c(X)\bigg)\nonumber\\
&=-(m-1){\rm tr}\bigg(c(u)c(v)c(w)c(X)\bigg){\rm Vol}(S^{n-1})\nonumber\\
&=\left\{
    \begin{array}{ll}
      0, & if~~~~ 2m\neq 4; \\
     4f^{-6}u^*\wedge v^*\wedge w^*\wedge X^*{\rm Vol}(S^{3}), & if~~~~ 2m=4.
    \end{array}
  \right.
\end{align}
\noindent{\bf (2)}Similarly, we obtain
\begin{align}\label{rtpp7}
&\int_{\|\xi\|=1}mf^{-4m+2}\|\xi\|^{-2m-2}\sum_{\mu}\xi_\mu \operatorname{tr}\bigg(c(u)c(v)c(w)c(X)\gamma c(\partial^\mu)c(\xi)\bigg)\sigma(\xi)\nonumber\\
&=\int_{\|\xi\|=1}mf^{-4m+2}\|\xi\|^{-2m-2}\sum_{\mu,\alpha,r,p,q,s}\xi_\mu \xi_su_rv_pw_qX_\alpha\operatorname{tr}\biggl\{c(e_r)c(e_p)c(e_q) c(e_\alpha)\gamma c(\partial^\mu)c(e_s)\biggr\}\sigma(\xi)\nonumber\\
&=mf^{-4m+2}\|\xi\|^{-2m-2}\times\frac{1}{2m}{\rm Vol}(S^{n-1})\sum_{\mu,\alpha,r,p,q,s}\delta^\mu_su_rv_pw_qX_\alpha\operatorname{tr}\biggl\{c(e_r)c(e_p)c(e_q) c(e_\alpha)\gamma c(e_\mu)c(e_s)\biggr\}\nonumber\\
&=mf^{-4m+2}\|\xi\|^{-2m-2}\times\frac{1}{2m}{\rm Vol}(S^{n-1})\sum_{\alpha,r,p,q,s}u_rv_pw_qX_\alpha\operatorname{tr}\biggl\{c(e_r)c(e_p)c(e_q) c(e_\alpha)\gamma c(e_s)c(e_s)\biggr\}\nonumber\\
&=-mf^{-4m+2}{\rm Vol}(S^{n-1}){\rm tr}\bigg(c(u)c(v)c(w)c(X)\gamma\bigg)\nonumber\\
&=\left\{
    \begin{array}{ll}
      0, & if~~~~ 2m\neq 4; \\
     8f^{-6}u^*\wedge v^*\wedge w^*\wedge X^*{\rm Vol}(S^{3}), & if~~~~ 2m=4.
    \end{array}
  \right..
\end{align}

The results of anther four items in (\ref{r2-2m-2}) are the same as in Section 4.1. The results of third iterm in (\ref{ABD}) is the same as in section 4.1. Thus, when $2m\neq 4,$ we have
 \begin{align}
 	&\mathscr{S}_{f(D+c(X)\gamma)f}\bigg(c(u),c(v),c(w)\bigg)\nonumber\\
 &=\;2^{m} \frac{2 \pi^{m}}{\Gamma\left(m\right)}\int_{M}mf^{-4m+1}\bigg(g(u,w)v(f)-g(v,w)u(f)-g(u,v)w(f)\bigg)d{\rm Vol}_M.
 \end{align}
When $2m=4,$ we have
\begin{align}
 	&\mathscr{S}_{f(D+c(X)\gamma)f}\bigg(c(u),c(v),c(w)\bigg)\nonumber\\
 &=\;16 \pi^{2}\int_{M}f^{-6}u^*\wedge v^*\wedge w^*\wedge X^*+8 \pi^{2}\int_{M}2f^{-7}\bigg(g(u,w)v(f)-g(v,w)u(f)-g(u,v)w(f)\bigg)d{\rm Vol}_M.
 \end{align}
Hence, Theorem \ref{rthm} holds.
\end{proof}

\section{The spectral torsion for the rescaled Dirac operator $f(D^E+\gamma \otimes \Phi)f$}
\label{section:5}
In this section, we consider an additional smooth vector bundle $E$ over $M$ (with $C^\infty(M)$-module
 of smooth sections $W$), equipped with a connection $\nabla^E$, with corresponding curvature-tensor
 $R^E$. And the tensor product vector bundle $S(TM)\otimes E$ is equiped with the compound
 connection:
 $$\nabla^{S(TM)\otimes E}=\nabla^{S(TM)}\otimes {\rm id_E}+{\rm id_{S(TM)}}\otimes\nabla^E,$$

where $\nabla^{S(TM)}$ is a spin conneection on the spinor bundle, defined by $\nabla^{S(TM)}_X=X+\frac{1}{4}\sum_{s,t}\langle \nabla^L_Xe_s,e_t\rangle c(e_s)c(e_t)$. The corresponding twisted
 Dirac operator $D^E$ is locally specified as follows:
\begin{align}
D^E=\sum_{i,j}g^{ij}c(\partial_i)\nabla^{S(TM)\otimes E}_{\partial_j}=\sum_{i=1}^nc(e_i)\nabla^{S(TM)\otimes E}_{e_i},
\end{align}
where $\nabla^{S(TM)\otimes E}_{\partial_j}=\partial_j+\sigma_j^s+\sigma_j^E$ and $\sigma_j^s=\frac{1}{4}\sum_{s,t}\langle \nabla^L_Xe_s,e_t\rangle c(e_s)c(e_t)$, $\sigma_j^E$ is a spin connection matrix of $E$.

Then the rescaled Dirac operator $f(D^E+\gamma \otimes \Phi)f$ is defined as:
\begin{align}\label{brp1}
f(D^E+\gamma \otimes \Phi)f	&=f\bigg(\sum_{i=1}^nc(e_i)\nabla^{S(TM)\otimes E}_{e_i}+\gamma \otimes \Phi\bigg)f,
\end{align}
where $\Phi\in\Gamma(M,End(E))$ and $\Phi=\Phi^*.$

From (6a) in \cite{Ka}, we have
\begin{align}
D_E^2&=-g^{ij}\partial_i\partial_j-2\sigma^j_{S(TM)\otimes E}\partial_j+\Gamma^k\partial_k-g^{ij}[\partial_i(\sigma^j_{S(TM)\otimes E})+\sigma^i_{S(TM)\otimes E}\sigma^j_{S(TM)\otimes E}-\Gamma^k_{ij}\sigma^k_{S(TM)\otimes E}]\nonumber\\
&+\frac{1}{4}s+\frac{1}{2}\sum_{i\neq j}R^E(e_i,e_j)c(e_i)c(e_j).
\end{align}

Moreover,
\begin{align}
(D^E+\gamma \otimes \Phi)^2&=-g^{ij}\partial_i\partial_j+\Gamma^k\partial_k+g^{ij}[-2\sigma_i^{S(TM)\otimes E}+c(\partial_i)(\gamma \otimes \Phi)+\gamma \otimes \Phi c(\partial_i)]\partial_j\nonumber\\
&+g^{ij}[-\partial_i(\sigma_j^{S(TM)\otimes E})-\sigma_i^{S(TM)\otimes E}\sigma_j^{S(TM)\otimes E}+(\gamma \otimes \Phi )c(\partial_i)\partial_j+\gamma \otimes \Phi c(\partial_i)\sigma_j^{S(TM)\otimes E}\nonumber\\
&+\Gamma^k_{ij}\sigma_k^{S(TM)\otimes E}+c(\partial_i)\partial_j(\gamma \otimes \Phi )+c(\partial_i)\sigma_j^{S(TM)\otimes E}(\gamma \otimes \Phi )]+\gamma^2 \otimes \Phi^2\nonumber\\
&+\frac{1}{4}s+\frac{1}{2}\sum_{i\neq j}R^E(e_i,e_j)c(e_i)c(e_j).
\end{align}
Similar to (\ref{p4}), we expand $(f(D^E+\gamma \otimes \Phi)f)^2$,
\begin{align}\label{pyyy4}
(f(D^E+\gamma \otimes \Phi)f)^2&=f(D^E+\gamma \otimes \Phi)f^2(D^E+\gamma \otimes \Phi)f\nonumber\\
&=f^4(D^E+\gamma \otimes \Phi)^2+fc(df^3)(D^E+\gamma \otimes \Phi)+f^3(D^E+\gamma \otimes \Phi)c(df)+fc(df^2)c(df).
\end{align}
Then we get the following lemma.
\begin{lem}
The leading symbols of $f(D^E+\gamma \otimes \Phi)f$ and $(f(D^E+\gamma \otimes \Phi)f)^2$:
\begin{align}
&\sigma_{2}[(f(D^E+\gamma \otimes \Phi)f)^2](x,\xi)= f^4\|\xi\|^2;\nonumber\\
		&\sigma_{1}[(f(D^E+\gamma \otimes \Phi)f)^2](x,\xi)=\sqrt{-1}f^4\bigg(\Gamma^j-2\sigma^j+c(\partial^j)\triangle+\triangle c(\partial^j)\bigg)\xi_j+\sqrt{-1}fc(df^3)c(\xi)+\sqrt{-1}f^3c(\xi)c(df);\nonumber\\
&\sigma_{1}[f(D^E+\gamma \otimes \Phi)f](x,\xi)= \sqrt{-1}f^2c(\xi);\nonumber\\
		&\sigma_{0}[f(D^E+\gamma \otimes \Phi)f](x,\xi)=-\frac{1}{4}f\sum_{i,s,t}\omega_{st}(e_i)c(e_i)c(e_s)c(e_t)f+f\sum_{j=1}^nc(e_j)\sigma_j^Ef+f\gamma \otimes \Phi f.
\end{align}
\end{lem}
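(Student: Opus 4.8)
The plan is to read off every symbol from the pseudodifferential composition formula already used in \eqref{ABD}, exploiting that $f$ is a scalar multiplication operator and that $\gamma\otimes\Phi$, the twisting matrix $\sigma_j^E$ and the curvature $R^E$ are all of order zero. First I would record the complete symbol of the unrescaled operator $D^E+\gamma\otimes\Phi$: from the local form $D^E=\sum_i c(e_i)(e_i+\sigma_i^s+\sigma_i^E)$ displayed above the lemma it is clear that $D^E+\gamma\otimes\Phi$ differs from $D$ only by the order-zero endomorphism $\sum_j c(e_j)\sigma_j^E+\gamma\otimes\Phi$, so that $\sigma_1(D^E+\gamma\otimes\Phi)=\sqrt{-1}c(\xi)$ and $\sigma_0(D^E+\gamma\otimes\Phi)=-\tfrac{1}{4}\sum_{i,s,t}\omega_{st}(e_i)c(e_i)c(e_s)c(e_t)+\sum_j c(e_j)\sigma_j^E+\gamma\otimes\Phi$.

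Next I would multiply by the multiplication operator $M_f$ on both sides. Since $\sigma(M_f)=f$ is independent of $\xi$, composing $M_f$ on the left is exact: $\sigma(M_f\circ Q)=f\,\sigma(Q)$ for every operator $Q$; composing $M_f$ on the right produces, besides $\sigma(Q)f$, only the single correction $\sum_j \partial_{\xi_j}[\sigma(Q)]\,D_{x_j}f$ from the $|\alpha|=1$ piece of the composition formula. Applying this twice to $Q=D^E+\gamma\otimes\Phi$ immediately gives $\sigma_1[f(D^E+\gamma\otimes\Phi)f]=\sqrt{-1}f^2c(\xi)$; the zeroth-order part is then $f^2\,\sigma_0(D^E+\gamma\otimes\Phi)$ together with the $df$-correction $\sqrt{-1}f\,c(e_j)D_{x_j}f$, and since $f$ is scalar and commutes with the Clifford and endomorphism factors this collects into $-\tfrac{1}{4} f\sum_{i,s,t}\omega_{st}(e_i)c(e_i)c(e_s)c(e_t)f+f\sum_j c(e_j)\sigma_j^E f+f\gamma\otimes\Phi f$, exactly as stated.

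For $(f(D^E+\gamma\otimes\Phi)f)^2$ I would not square the pseudodifferential operator directly but instead use the operator identity \eqref{pyyy4}, which already decomposes it as $f^4(D^E+\gamma\otimes\Phi)^2+fc(df^3)(D^E+\gamma\otimes\Phi)+f^3(D^E+\gamma\otimes\Phi)c(df)+fc(df^2)c(df)$, together with the Lichnerowicz-type expansion of $(D^E+\gamma\otimes\Phi)^2$ displayed just above the lemma, which exhibits it as an operator of Laplace type \eqref{p3} with principal symbol $\|\xi\|^2$ and first-order symbol $\sqrt{-1}\big(\Gamma^j-2\sigma^j+c(\partial^j)(\gamma\otimes\Phi)+(\gamma\otimes\Phi)c(\partial^j)\big)\xi_j$. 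Multiplying the Laplace-type operator by the scalar $f^4$ preserves these symbols up to the factor $f^4$; the two mixed terms $fc(df^3)(D^E+\gamma\otimes\Phi)$ and $f^3(D^E+\gamma\otimes\Phi)c(df)$ are first order and contribute $\sqrt{-1}fc(df^3)c(\xi)$ and $\sqrt{-1}f^3c(\xi)c(df)$ to $\sigma_1$, while their own $df$-corrections, the last term $fc(df^2)c(df)$, and the curvature and scalar terms $\tfrac{1}{2}\sum_{i\neq j}R^E(e_i,e_j)c(e_i)c(e_j)+\tfrac{1}{4}s$ are all of order $\le 0$ and hence invisible to $\sigma_2$ and $\sigma_1$. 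Collecting the pieces of orders $2$ and $1$ yields the two displayed formulas; in effect the computation is that of Lemma \ref{lem331} under the substitutions $D\mapsto D^E$ and $\mathbb{A}\mapsto\gamma\otimes\Phi$.

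The genuinely routine part is this symbol bookkeeping; the only point that needs care — and the one I would treat as the main obstacle — is verifying that none of the derivative corrections produced when composing $M_f$, $c(df^k)$ and $D^E+\gamma\otimes\Phi$, and none of the twisting contributions $\sigma_j^E$ and $R^E$, spill into order $2$ or order $1$, so that the top two symbols of the square agree with those of the untwisted rescaled operator up to the harmless replacement of $\mathbb{A}$ by $\gamma\otimes\Phi$.
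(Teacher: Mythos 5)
Your derivation of $\sigma_2$ and $\sigma_1$ of the square from the expansion \eqref{pyyy4} is sound, as is $\sigma_1[f(D^E+\gamma\otimes\Phi)f]=\sqrt{-1}f^2c(\xi)$. The problem is in the last step of your computation of $\sigma_0[f(D^E+\gamma\otimes\Phi)f]$. You correctly identify, from the $|\alpha|=1$ piece of the composition formula applied to $(D^E+\gamma\otimes\Phi)\circ M_f$, the correction
\[
\sum_j\partial_{\xi_j}\bigl[\sqrt{-1}c(\xi)\bigr]\,D_{x_j}f
=\sum_j\sqrt{-1}\,c(dx^j)\bigl(-\sqrt{-1}\bigr)\partial_{x_j}f
=c(df),
\]
so after left-multiplication by $M_f$ the zeroth-order symbol is
\[
\sigma_0\bigl[f(D^E+\gamma\otimes\Phi)f\bigr]
= f\,\sigma_0(D^E+\gamma\otimes\Phi)\,f + f\,c(df).
\]
The same conclusion follows at the operator level: since $D^E(fu)=c(df)u+fD^Eu$ and $\gamma\otimes\Phi$ is a multiplication operator, one has the exact identity $f(D^E+\gamma\otimes\Phi)f=f^2(D^E+\gamma\otimes\Phi)+f\,c(df)$, and $f\,c(df)$ is an order-zero multiplication operator contributing its full value to $\sigma_0$. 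You then assert that $f^2\sigma_0(D^E+\gamma\otimes\Phi)$ \emph{together with} the $df$-correction ``collects into'' the displayed expression $-\tfrac14 f\sum\omega_{st}(e_i)c(e_i)c(e_s)c(e_t)f+f\sum_j c(e_j)\sigma_j^E f+f\,\gamma\otimes\Phi\, f$, but that expression equals $f^2\sigma_0(D^E+\gamma\otimes\Phi)$ alone; the extra $f\,c(df)$ does not vanish (it is nonzero whenever $df\ne0$) and does not absorb into any of the other summands. Your argument therefore contradicts itself: you produce a nonzero correction and then silently discard it. If you intend to match the Lemma as stated, you must either explain why the $f\,c(df)$ term may be dropped for the purposes of the subsequent residue computation, or observe that the stated $\sigma_0$ is missing this term -- the same omission already appears in the earlier Lemma \ref{lem331} for $f(D+\mathbb{A})f$, where $\sigma_0(f(D+\mathbb{A})f)$ is recorded as $-\tfrac14 f\sum\omega_{st}(e_i)c(e_i)c(e_s)c(e_t)f+f\mathbb{A}f$ without a $c(df)$ contribution. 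Simply claiming the correction is absorbed is not a valid step.
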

Now we are in position to prove the main result in this section.
 \begin{thm}\label{brthm}
 	Let $M$ be an $n=2m$ dimensional ($n\geq 3$) oriented compact spin Riemannian manifold, for the rescaled Dirac operator with the trilinear Clifford multiplication by functional of differential one-forms $c(u),c(v),c(w),$ the spectral torsion for $f(D^E+\gamma \otimes \Phi)f$ equals to
 \begin{align}
 	&\mathscr{S}_{f(D^E+\gamma \otimes \Phi)f}\bigg(c(u),c(v),c(w)\bigg)\nonumber\\
 &=\;2^{m} \frac{2 \pi^{m}}{\Gamma\left(m\right)}\int_{M}mf^{-4m+1}\bigg(g(u,w)v(f)-g(v,w)u(f)-g(u,v)w(f)\bigg)d{\rm Vol}_M.
 \end{align}
 \end{thm}
\begin{proof}In the normal coordinates $U$ of $x_0$ in $M$, $\sigma_j^E(x_0)=0.$

 For $c(u)c(v)c(w)\sigma_{-2 m}\left((f(D^E+\gamma \otimes \Phi)f)^{-2 m}\right)\sigma_{0}(f(D^E+\gamma \otimes \Phi)f)(x_{0})$:
\begin{align}\label{br0-2m}
&c(u)c(v)c(w)\sigma_{-2 m}\left((f(D^E+\gamma \otimes \Phi)f)^{-2 m}\right)\sigma_{0}(f(D^E+\gamma \otimes \Phi)f)(x_{0})\nonumber\\
&=f^{-4m}\|\xi\|^{-2m}c(u)c(v)c(w)\left(-\frac{1}{4}f\sum_{ist}w_{st}(e_i)c(e_i)c(e_s)c(e_t)f\right)(x_0)\nonumber\\
&+f^{-4m+2}\|\xi\|^{-2m}c(u)c(v)c(w)\sum_{j=1}^{n}c(e_j)\sigma_j^E(x_0)\nonumber\\
&+f^{-4m+2}\|\xi\|^{-2m}c(u)c(v)c(w)\gamma\otimes\Phi(x_0)\nonumber\\
&=f^{-4m+2}\|\xi\|^{-2m}c(u)c(v)c(w)\gamma\otimes\Phi.
\end{align}
 We note $$\operatorname{tr}\biggl( c(u)c(v)c(w)\gamma\biggr)=\operatorname{Str}\biggl( c(u)c(v)c(w)\biggr)=0,$$ and $$\operatorname{tr}\biggl( c(u)c(v)c(w)\gamma\otimes\Phi\biggr)=\operatorname{tr}\biggl( c(u)c(v)c(w)\gamma\biggr)\operatorname{tr}\biggl( c(u)c(v)c(w)\Phi\biggr),$$ then further integration, we get
\begin{align}\label{brm8}
	&\int_{\|\xi\|=1}\operatorname{tr}\biggl\{ c(u)c(v)c(w)\sigma_{-2 m}\left((f(D^E+\gamma \otimes \Phi)f)^{-2 m}\right)\sigma_{0}(f(D^E+\gamma \otimes \Phi)f) \biggr\}(x_0)\sigma(\xi)\nonumber\\
&=\int_{\|\xi\|=1} f^{-4m+2}\|\xi\|^{-2m}\operatorname{tr}\biggl( c(u)c(v)c(w)\gamma\biggr)\operatorname{tr}\biggl( c(u)c(v)c(w)\Phi\biggr)\sigma(\xi)\nonumber\\
&=0.
\end{align}
For $c(u)c(v)c(w)\sigma_{-2 m-1}\left((f(D^E+\gamma \otimes \Phi)f)^{-2 m}\right)\sigma_{1}(f(D^E+\gamma \otimes \Phi)f)(x_0)$:
\begin{align}\label{br2-2m-2}
	&c(u)c(v)c(w)\sigma_{-2 m-1}\left((f(D^E+\gamma \otimes \Phi)f)^{-2 m}\right)\sigma_{1}(f(D^E+\gamma \otimes \Phi)f)(x_0)\nonumber\\
	&=mf^{-4m+2}\|\xi\|^{-2m-2}\sum_{\mu}\xi_\mu c(u)c(v)c(w)c(\partial^\mu)\gamma \otimes \Phi c(\xi)(x_0)\nonumber\\
&+mf^{-4m+2}\|\xi\|^{-2m-2}\sum_{\mu}\xi_\mu c(u)c(v)c(w)\gamma \otimes \Phi c(\partial^\mu)c(\xi)(x_0)\nonumber\\
	&-mf^{-4m-1}\|\xi\|^{-2m} c(u)c(v)c(w)c(df^3)(x_0)\nonumber\\
&+mf^{-4m+1}\|\xi\|^{-2m-2} c(u)c(v)c(w)c(\xi)c(df)c(\xi)(x_0)\nonumber\\
&-2mf^{-4m+6}\|\xi\|^{-2m-2}\sum_{\mu}\xi_\mu\partial_{x_\mu}(f^{-4})c(u)c(v)c(w)c(\xi)(x_0)\nonumber\\
&+f^{-4m+6}\sum_{k=0}^{m-3}\sum_{\mu}(-m+k+1)\|\xi\|^{-2m}\xi_\mu\partial_{x_\mu}(f^{-4})c(u)c(v)c(w)c(\xi)(x_0).
\end{align}
\noindent{\bf (1)}By the relation of the Clifford action and $\operatorname{tr}(AB)=\operatorname{tr}(BA)$, we have the equality:

\begin{align}\label{brt7}
	&\int_{\|\xi\|=1}mf^{-4m+2}\|\xi\|^{-2m-2}\sum_{\mu}\xi_\mu \operatorname{tr}\bigg(c(u)c(v)c(w)c(\partial^\mu)\gamma c(\xi)\bigg)\sigma(\xi)\nonumber\\
&=\int_{\|\xi\|=1}mf^{-4m+2}\|\xi\|^{-2m-2}\sum_{\mu,\alpha,r,p,q,s}\xi_\mu \xi_su_rv_pw_qX_\alpha\operatorname{tr}\biggl\{c(e_r)c(e_p)c(e_q) c(e_\mu)\gamma c(e_s)\biggr\}\sigma(\xi)\nonumber\\
&=-\int_{\|\xi\|=1}mf^{-4m+2}\|\xi\|^{-2m-2}\sum_{\mu,\alpha,r,p,q,s}\xi_\mu \xi_su_rv_pw_qX_\alpha\operatorname{tr}\biggl\{c(e_r)c(e_p)c(e_q) c(e_\mu)c(e_s)\gamma \biggr\}\sigma(\xi)\nonumber\\
&=-mf^{-4m+2}\times\frac{1}{2m}{\rm Vol}(S^{n-1})\sum_{\mu,\alpha,r,p,q,s}\delta^\mu_su_rv_pw_qX_\alpha\operatorname{tr}\biggl\{c(e_r)c(e_p)c(e_q) c(e_\mu)c(e_s)\gamma\biggr\}\nonumber\\
&=-mf^{-4m+2}\times\frac{1}{2m}{\rm Vol}(S^{n-1})\sum_{\alpha,r,p,q,s}u_rv_pw_qX_\alpha\operatorname{tr}\biggl\{c(e_r)c(e_p)c(e_q) c(e_s)c(e_s)\gamma\biggr\}\nonumber\\
&=mf^{-4m+2}\times{\rm Vol}(S^{n-1})\sum_{\alpha,r,p,q,s}u_rv_pw_qX_\alpha\operatorname{tr}\biggl\{c(e_r)c(e_p)c(e_q) \gamma\biggr\}\nonumber\\
&=mf^{-4m+2}{\rm tr}\bigg(c(u)c(v)c(w)\gamma\bigg){\rm Vol}(S^{n-1})\nonumber\\
&=0.
\end{align}
\noindent{\bf (2)}Similarly, we obtain
\begin{align}\label{brtpp7}
&\int_{\|\xi\|=1}mf^{-4m+2}\|\xi\|^{-2m-2}\sum_{\mu}\xi_\mu \operatorname{tr}\bigg(c(u)c(v)c(w)\gamma c(\partial^\mu)c(\xi)\bigg)\sigma(\xi)\nonumber\\
&=\int_{\|\xi\|=1}mf^{-4m+2}\|\xi\|^{-2m-2}\sum_{\mu,\alpha,r,p,q,s}\xi_\mu \xi_su_rv_pw_qX_\alpha\operatorname{tr}\biggl\{c(e_r)c(e_p)c(e_q) c(e_\alpha)\gamma c(\partial^\mu)c(e_s)\biggr\}\sigma(\xi)\nonumber\\
&=mf^{-4m+2}\|\xi\|^{-2m-2}\times\frac{1}{2m}{\rm Vol}(S^{n-1})\sum_{\mu,\alpha,r,p,q,s}\delta^\mu_su_rv_pw_qX_\alpha\operatorname{tr}\biggl\{c(e_r)c(e_p)c(e_q) c(e_\alpha)\gamma c(e_\mu)c(e_s)\biggr\}\nonumber\\
&=mf^{-4m+2}\|\xi\|^{-2m-2}\times\frac{1}{2m}{\rm Vol}(S^{n-1})\sum_{\alpha,r,p,q,s}u_rv_pw_qX_\alpha\operatorname{tr}\biggl\{c(e_r)c(e_p)c(e_q) c(e_\alpha)\gamma c(e_s)c(e_s)\biggr\}\nonumber\\
&=-mf^{-4m+2}{\rm Vol}(S^{n-1}){\rm tr}\bigg(c(u)c(v)c(w)\gamma\bigg)\nonumber\\
&=0.
\end{align}

The results of anther four items in (\ref{brm8}) are the same as in Section 4.1. The results of third iterm in (\ref{ABD}) is the same as in section 4.1. Thus, we have
\begin{align}
	&\int_{\|\xi\|=1}\operatorname{tr} \biggl\{-\sqrt{-1} c(u)c(v)c(w) \sum_{j=1}^{2m} \partial_{\xi_{j}}\left[\sigma_{-2 m}\left((f(D^E+\gamma \otimes \Phi)f)^{-2 m}\right)\right] \partial_{x_{j}}\left[\sigma_{0}(f(D^E+\gamma \otimes \Phi)f)\right]\biggr\}(x_0)\sigma(\xi)\nonumber\\
&=2f^{-4m+1}\bigg(g(u,w)v(f)-g(u,v)w(f)-g(v,w)u(f)\bigg){\rm tr}[id]{\rm Vol}(S^{n-1}).
\end{align}
Thus, summing up the results in {\bf (I)}-{\bf (III)} to get
\begin{align}\label{brz2}
&	\mathscr{S}_{f(D^E+\gamma \otimes \Phi)f}\bigg(c(u),c(v),c(w)\bigg)\nonumber\\
	&=2^{m} \frac{2 \pi^{m}}{\Gamma\left(m\right)}\int_{M}mf^{-4m+1}\bigg(g(u,w)v(f)-g(v,w)u(f)-g(u,v)w(f)\bigg)d{\rm Vol}_M.
\end{align}
Hence, Theorem \ref{brthm} holds.
\end{proof}

\section*{ Declarations}
\textbf{Ethics approval and consent to participate:} Not applicable.

\textbf{Consent for publication:} Not applicable.

\textbf{Availability of data and materials:} The authors confrm that the data supporting the findings of this study are available within the article.

\textbf{Competing interests:} The authors declare no competing interests.

\textbf{Author Contributions:} All authors contributed to the study conception and design. Material preparation,
data collection and analysis were performed by TW and YW. The first draft of the manuscript was written
by TW and all authors commented on previous versions of the manuscript. All authors read and approved
the final manuscript.

\section*{Acknowledgements}
This first author was supported by NSFC. No.12401059 and Liaoning Province Science and Technology Plan Joint Project 2023-BSBA-118. The second author was supported NSFC. No.11771070. The authors thank the referee for his (or her) careful reading and helpful comments.

\section*{References}


\begin{thebibliography}{00}
\bibitem{Ac} Ackermann T. A note on the Wodzicki residue. J. Geom. Phys., 1996, 20: 404-406.
\bibitem{Ac1} Ackermann T, Tolksdorf J. A generalized Lichnerowicz formula, the Wodzicki residue and gravity. J. Geom. Phys.,
 1996, 19(143).

\bibitem{B1}Bochniak A, Sitarz A. Spectral interaction between universes. J. Cosmol. Astropart. P., 2022, 4: 055.

 \bibitem{Co1}Connes A. Quantized calculus and applications. XIth International Congress of Mathematical Physics(Paris,1994),
 Internat Press, Cambridge, MA, 1995, 15-36.

 \bibitem{Co2}Connes A. The action functinal in Noncommutative geometry. Commun. Math. Phys., 1988, 117: 673-683.


\bibitem{co4}Connes A, Chamseddine A. Inner fluctuations of the spectral action. J. Geom. Phys., 2007, 57(1): 1-21.

\bibitem{co5}Connes A. Noncommutative Geometry. Academic Press, San Diego. 1994.

\bibitem{DL3}Dabrowski L, Sitarz A, Zalecki P.  Spectral Torsion, Commun. Math. Phys., 2024, 405(130).

    \bibitem{FGV2}Figueroa H, Gracia-Bond\'{\i}a J M, Lizzi F, V\'{a}rilly J C. A nonperturbative form of the spectral action principle in noncommutative geometry. J. Geom. Phys., 1998, 26(3-4): 329-339.

\bibitem{Gu} Guillemin V W. A new proof of Weyl's formula on the asymptotic distribution of eigenvalues. Adv. Math. 1985, 55(2): 131-160.

 \bibitem{Hj}Hong J, Wang Y. The spectral Einstein functional for the Dirac operator with torsion. arxiv preprint arxiv:2412.08028, 2024.
\bibitem{J1}Jos\'{e} M. Gracia-Bond\'{\i}a, Joseph C. V$\acute{a}$rilly, H\'{e}ctor Figueroa. Elements of noncommutative geometry. Mathematical Gazette, 2001, 85(504).

        \bibitem{Ka}Kastler D. The Dirac Operator and Gravitation. Commun. Math. Phys., 1995, 166: 633-643.
\bibitem{KW}Kalau W, Walze M. Gravity, Noncommutative geometry and the Wodzicki residue. J. Geom. Phys., 1995, 16: 327-344.

  \bibitem{PF1} Pf$\ddot{a}$ffle F, Stephan C A. On gravity, torsion and the spectral action principle, J. Funct. Anal., 2012, 262(1529).
 \bibitem{PF2} Pf$\ddot{a}$ffle F, Stephan C A. Chiral Asymmetry and the Spectral Action. Commun. Math. Phys., 2013, 321: 283-310.
 \bibitem{sa}Sitarz A, Zajac A. Spectral action for scalar perturbations of Dirac operators, Lett. Math. Phys., 2011, 98(333).
\bibitem{Si1}Sitarz A. Conformally Rescaled Noncommutative Geometries. Geometric Methods in Physics.
Trends in Mathematics. 2014, 83-100.
\bibitem{wo2}Walze M. Noncommutative residue I: Fundamentals, in K-theory, Arithmetic and Geometry, Yu. I. Manin, ed., Lecture
Notes in Mathematics Vol. 1289 (Springer, Berlin, 1987).
\bibitem{Wa0} Wang Y. A Kastler-Kalau-Walze type theorem and the Spectral action for perturbations of Dirac operators on manifolds
 with boundary. Abstr. Appl. Anal. 2014, (2014): 619120.
 \bibitem{Wa3} Wang Y. Gravity and the Noncommutative Residue for Manifolds with Boundary. Lett. Math. Phys., 2007, 80: 37-56.


\bibitem{WWY}Wang J, Wang Y and Yang C L. Dirac operators with torsion and the noncommutative residue for manifolds with boundary.
 J. Geom. Phys., 2014, 81(92).
\bibitem{WJ}Wang J, Wang Y. A general Kastler-Kalau-Walze type theorem for manifolds with boundary, Int. J. Geom. Methods
 Mod. Phys., 2016, 13: 1650003.
\bibitem{WWw}Wang J, Wang Y, Wu T. Dirac operators with torsion, spectral Einstein functionals and the noncommutative residue.
J. Math. Phys., 2023, 64: 102505.
\bibitem{WWj}Wang J, Wang Y. The spectral torsion for the Connes type operator. J. High Energ. Phys., 2025, 2025(13).
\bibitem{Wo} Wodzicki M. Local invariants of spectral asymmetry. Invent. Math., 1995, 75(1): 143-178.
\bibitem{wt}Wu T, Wang Y. The Rescaled Dirac operator fDh and the noncommutative residue for 6-dimensional manifolds. arXiv:2407.14818.
\bibitem{YY}Yu Y. The Index Theorem and The Heat Equation Method, Nankai Tracts in Mathematics,
 World Scientific Publishing (2001)
\end{thebibliography}
\end{document}